\newcommand{\rt}{\rightarrow}
\newcommand{\lrt}{\longrightarrow}
\newcommand{\va}{\vartheta}
\newcommand{\st}{\stackrel}
\newcommand{\la}{\lambda}
\newcommand{\La}{\Lambda}
\newcommand{\Ga}{\Gamma}
\newcommand{\CA}{\mathcal{A} }
\newcommand{\DD}{\mathcal{D} }
\newcommand{\CP}{\mathcal{P} }
\newcommand{\CS}{\mathcal{S} }
\newcommand{\CX}{\mathcal{X} }
\newcommand{\CB}{\mathcal{B} }
\newcommand{\Mod}{{\rm{Mod\mbox{-}}}}
\newcommand{\mmod}{{\rm{{mod\mbox{-}}}}}
\newcommand{\mmodd}{{\rm{mod}}_0\mbox{-}}
\newcommand{\Inj}{{\rm{Inj}\mbox{-}}}
\newcommand{\Prj}{{\rm{Prj}\mbox{-}}}
\newcommand{\prj}{{\rm{prj}\mbox{-}}}
\newcommand{\Ind}{\rm{{Ind}\mbox{-}}}
\newcommand{\GPrj}{{\GP}\mbox{-}}
\newcommand{\Gprj}{{\Gp\mbox{-}}}
\newcommand{\GInj}{{\GI \mbox{-}}}
\newcommand{\inj}{{\rm{inj}\mbox{-}}}
\newcommand{\Ginj}{{\Gi \mbox{-}}}
\newcommand{\im}{{\rm{Im}}}
\newcommand{\op}{{\rm{op}}}
\newcommand{\add}{{\rm{add}\mbox{-}}}
\newcommand{\GP}{{\rm{GPrj}}}
\newcommand{\Gp}{{\rm{Gprj}}}
\newcommand{\GI}{{\rm{GInj}}}
\newcommand{\Gi}{{\rm{Ginj}}}
\newcommand{\Coker}{{\rm{Coker}}}
\newcommand{\Ker}{{\rm{Ker}}}
\newcommand{\Hom}{{\rm{Hom}}}
\newcommand{\Ext}{{\rm{Ext}}}
\newcommand{\End}{{\rm{End}}}
\theoremstyle{plain}
\newtheorem{theorem}{Theorem}[section]
\newtheorem{corollary}[theorem]{Corollary}
\newtheorem{lemma}[theorem]{Lemma}
\newtheorem{proposition}[theorem]{Proposition}
\theoremstyle{definition}
\newtheorem{example}[theorem]{Example}
\newtheorem{remark}[theorem]{Remark}
\theoremstyle{plain}
\newtheorem{stheorem}{Theorem}[subsection]
\newtheorem{slemma}[stheorem]{Lemma}
\newtheorem{sproposition}[stheorem]{Proposition}
\theoremstyle{definition}
\newtheorem{sexample}[stheorem]{Example}
\newtheorem{sremark}[stheorem]{Remark}
\numberwithin{equation}{section}
\begin{document}

\title[Auslander's Formula]{Auslander's Formula for contravariantly finite subcategories}

\author[Javad Asadollahi, Rasool Hafezi and Mohammad H. Keshavarz]{Javad Asadollahi, Rasool Hafezi and Mohammad H. Keshavarz}

\address{Department of Mathematics, University of Isfahan, P.O.Box: 81746-73441, Isfahan, Iran }
\email{asadollahi@ipm.ir, asadollahi@sci.ui.ac.ir}

\address{School of Mathematics, Institute for Research in Fundamental Sciences (IPM), P.O.Box: 19395-5746, Tehran, Iran }
\email{hafezi@ipm.ir}

\address{Department of Mathematics, University of Isfahan, P.O.Box: 81746-73441, Isfahan, Iran }
\email{keshavarz@sci.ui.ac.ir}

\subjclass[2010]{18A25, 16G10, 16S50}

\keywords{Functor categories, Recollements, Artin algebras, Endomorphism algebras}

\begin{abstract}
A relative version of Auslander's formula with respect to a contravariantly finite subcategory will be given. Dual version will be treated. Several examples and applications will be provided. In particular, we show that under certain circumstances, if relative Auslander algebras of artin algebras $\La$ and $\La'$ are Morita equivalent, then $\La$ and $\La'$ are also Morita equivalent.
\end{abstract}

\maketitle

\tableofcontents

\section{Introduction}\label{1}
Let $\CA$ be an essentially small abelian category. The Hom sets will be denoted either by $\Hom_{\CA}( - , - )$, $\CA( - , - )$ or even just $( - , - )$, if there is no risk of ambiguity. By definition, a (right) $\CA$-module is a contravariant additive functor $F:\CA \rt \CA b$, where $\CA b$ denotes the category of abelian groups. The $\CA$-modules and natural transformations between them, called morphisms, form an abelian category denoted by $\Mod\CA$ or sometimes $(\CA^{\op},\CA b)$. An $\CA$-module $F$ is called finitely presented if there exists an exact sequence
$$\CA( - ,A) \rt \CA( - ,A') \rt F \rt 0,$$
with $A$ and $A'$ in $\CA$. All finitely presented $\CA$-modules form a full subcategory of $\Mod\CA$, denoted by $\mmod\CA$ or sometimes $\rm{f.p}(\CA^{op}, \CA b)$. The category of all covariant additive functors and its full subcategory consisting of finitely presented (left) $\CA$-modules will be denoted by $\CA$-Mod and $\CA$-${\rm mod}$, respectively.

Since every finitely generated subobject of a finitely presented object is finitely presented \cite[Page 200]{Au1}, Auslander called them coherent functors. Auslander's work on coherent functors \cite[page 205]{Au1} implies that the Yoneda functor $\CA \lrt \mmod\CA$ induces a localisation sequence of abelian categories
\[\xymatrix{ \mmodd\CA \ar[rr]  && \mmod\CA  \ar[rr]^{} \ar@/^1pc/[ll] && \CA  \ar@/^1pc/[ll]^{} }\]
where $\mmodd\CA$ is the full subcategory of $\mmod\CA$ consisting of those functors $F$ for them there exists a presentation $\CA( - ,A) \lrt \CA( - ,A') \lrt F \lrt 0$ such that $A \lrt A'$ is an epimorphism. See \cite[Theorem 2.2]{Kr1}, where $\mmodd\CA$ is denoted by ${\rm eff}\CA$.
This, in particular, implies that the functor $\mmod\CA \lrt \CA$, that is the left adjoint of the Yoneda functor, induces an equivalence $$\frac{\mmod\CA}{\mmodd\CA}\simeq \CA.$$ Following Lenzing \cite{L} this equivalence will be called the Auslander Formula.

Auslander's formula suggests that for studying an abelian category $\CA$ one may study $\mmod \CA$, the category of finitely presented additive functors on $\CA$, that has nicer homological properties than $\CA$, and then translate the results back to $\CA$. In this paper, we replace $\CA$ with a contravariantly finite subcategory $\CX$ of $\CA$ that contains projective objects.
In Section \ref{RelAusFormula}, we show that for a right coherent ring $A$ and every contravariantly finite subcategory $\CX$ of $\mmod A$ containing projective $A$-modules, there exists a recollement
\[\xymatrix{\mmodd \CX \ar[rr]^{}  && \mmod \CX \ar[rr]^{} \ar@/^1pc/[ll]_{} \ar@/_1pc/[ll]_{} && \mmod A. \ar@/^1pc/[ll]_{} \ar@/_1pc/[ll]_{} }\]
This, in particular, implies that \[\frac{\mmod\CX}{\mmodd\CX}\simeq \mmod A.\]
In case we set $\CX=\mmod A$, we get the usual Auslander's formula. Recently, Ogawa \cite{O} also obtained similar result for a functorially finite subcategory $\CX$ of $\mmod\La$, where $\La$ is a finite dimensional artin algebra over a field $k$.

The importance of this relative version will be illustrated by some interesting examples of $\CX$, see Section \ref{Examples and applications}. Furthermore, in Section \ref{Applications}, we provide some applications of this recollement. In particular, we get an interesting result on the relation between the Morita equivalence of the endomorphism algebras of certain modules $M$ and $M'$ over artin algebras $\La$ and $\La'$ and the Morita equivalence of $\La$ and $\La'$. Our results will recover a result of Kerner and Yamagata \cite[Theorem 4.3]{KY}, see Theorem \ref{Morita}.

In Section \ref{Covariant functor}, we consider the category of left $\La$-modules, where $\La$ is an artin algebra, and present a recollement containing $\La\mbox{-}{\rm mod}$, see Theorem \ref{main3} below. To this end, we discuss briefly the structure of injective finitely presented covariant functors in Subsection \ref{Injective FP covariant functors}. Using this, in the last subsection of the paper, for every functorially finite subcategory $\CX$ of $\mmod\La$, we construct a duality $\widetilde{D}_{\CX}$ between the categories of right and left $\La$-modules, also in stable level.

\section{Preliminaries}\label{Preliminaries}
Let $\CA$ be an essentially small additive category and $\CX$ be a full subcategory of it. We let $\Mod\CX$, respectively $\mmod\CX$, denote the category of right, respectively finitely presented right,  $\CX$-modules. Recall that a (right) $\CX$-module is a contravariant additive functor $F:\CX \rt \CA b$.

Let $A \in \CA$. A morphism $\varphi : X \rt A$ with $X \in \CX$ is called a right $\CX$-approximation of $A$ if $\CA( - , X)\vert_{\CX} \lrt \CA( - ,A)\vert_{\CX} \lrt 0$ is exact, where
$\CA( - ,A) \vert_{\CX}$ is the functor $\CA( - ,A)$ restricted to $\CX$. Hence $A$ has a right $\CX$-approximation if and only if $( - ,A)\vert_{\CX}$ is a finitely generated object of $\Mod\CX$. $\CX$ is called contravariantly finite if every object of $\CA$ admits a right $\CX$-approximation. Dually, one can define the notion of left $\CX$-approximations and covariantly finite subcategories. $\CX$ is called functorially finite, if it is both covariantly and contravariantly finite.

A morphism $X \rt Y$ is a weak kernel of a morphism $Y \rt Z$ in $\CX$ if the induced sequence
$$( - , X)  \lrt ( - , Y)  \lrt ( - , Z)$$
is exact on $\CX$. It is known that $\mmod\CX$ is an abelian category if and only if $\CX$ admits weak kernels, see e.g. \cite[Chapter III, \S 2]{Au2} or \cite[Lemma 2.1]{Kr2}. \\

Obviously, if $\CX$ is a contravariantly finite subcategory of $\CA$, then it admits weak kernels and hence $\mmod\CX$ is an abelian category.

\subsection{Recollements of abelian categories}\label{Recollements}
A subcategory $\CS$ of an abelian category $\CA$ is called a Serre subcategory, if it is closed under taking subobjects, quotients and extensions. Let $\CS$ be a Serre subcategory of $\CA$. The quotient category $\CA/\CS$ is by definition the localization of $\CA$ with respect to the collection of all morphisms whose kernels and cokernels are in $\CS$. It is known \cite{Ga} that $\CA/\CS$ is an abelian category and the quotient functor $Q: \CA \lrt \CA/\CS$ is exact with $\Ker Q = \CS$.

Let $\CA'$, $\CA$ and $\CA''$ be abelian categories. A recollement of $\CA$ with respect to $\CA'$ and $\CA''$ is a diagram
\[\xymatrix{\CA'\ar[rr]^{u}  && \CA \ar[rr]^{v} \ar@/^1pc/[ll]^{u_{\rho}} \ar@/_1pc/[ll]_{u_{\la}} && \CA'' \ar@/^1pc/[ll]^{v_{\rho}} \ar@/_1pc/[ll]_{v_{\la}} }\]
of additive functors such that $u$, $v_{\la}$ and $v_{\rho}$ are fully faithful, $(u_{\la},u)$, $(u,u_{\rho})$, $(v_{\la},v)$ and $(v,v_{\rho})$ are adjoint pairs and $\im u= \Ker v$.

Note that $v_{\la}$ is fully faithful if and only if $v_{\rho}$ is fully faithful. It follows quickly in a recollement situation that the functors $u$ and $v$ are exacts, $u$ induces an equivalence between $\CA'$ and the Serre subcategory $\im u = \Ker v$ of $\CA$ and there exists an equivalence $\CA'' \simeq \CA/\CA'$, see for instance \cite[Remark 2.2]{P1}.

A localisation, resp. colocalisation, sequence consists only the lower, resp. upper, two rows of a recollement such that the functors appearing in them satisfy all the conditions of a recollement that involve only these functors.

Two recollements $(\CA', \CA, \CA'')$ and $(\CB', \CB, \CB'')$ are equivalent if there exist equivalences $\Phi: \CA' \rt \CB'$, $\Psi: \CA \rt \CB$ and $\Theta: \CA'' \rt \CB''$, such
that the six diagrams associated to the six functors of the recollements commute up to natural equivalences, see \cite[Lemma 4.2]{PV}.

\begin{remark}\label{existence of recollements}
Let $v:\CA \rt \CA''$ be an exact functor between abelian categories admitting a left and a right adjoint, $v_{\la}$ and $v_{\rho}$, respectively, such that one of the $v_{\la}$ or $v_{\rho}$, and hence both of them, are fully faithful. Then we get a recollement $(\Ker v, \CA, \CA'')$ of abelian categories, see \cite[Remark 2.3]{P1} for details.
\end{remark}

In the recollement $(\CA',\CA,\CA'')$ let us denote the counits of the adjunctions $uu_{\rho} \rt 1_{\CA}$ and $v_{\la}v \rt 1_{\CA}$ by $\eta^{uu_{\rho}}$ and $\eta^{v_{\la}v}$, respectively, and the units of the adjunctions $1_{\CA} \rt uu_{\la}$ and $1_{\CA} \rt v_{\rho}v$  by $\delta^{uu_{\la}}$ and $\delta^{v_{\rho}v}$, respectively.

\begin{remark}\label{Exact sequences}
Let $(\CA', \CA, \CA'')$ be a recollement of abelian categories. Then for every $A \in \CA$ there exists the following two exact sequences
\[0 \lrt uu_{\rho}(A) \st{\eta^{uu_{\rho}}_A} \lrt A \st{\delta^{v_{\rho}v}_A} \lrt v_{\rho}v(A) \lrt {\rm{Coker}}{\delta^{v_{\rho}v}_A} \lrt 0;\]
\[0 \lrt \Ker \eta^{v_{\la}v}_A \lrt v_{\la}v(A) \st{\eta^{v_{\la}v}_A} \lrt A \st{\delta^{uu_{\la}}_A} \lrt uu_{\la}(A) \lrt  0.\]

Moreover, there exist $A'_0$ and $A'_1 \in \CA'$ such that ${\rm{Coker}}{\delta^{v_{\rho}v}_A}=u(A'_0)$ and $\Ker \eta^{v_{\la}v}_A=u(A'_1)$. For the proof see \cite[Proposition 2.8]{PV}.
\end{remark}

\subsection{Dualising $R$-varieties}
Let $R$ be a commutative artinian ring. The notion of dualising $R$-varieties is introduced by Auslander and Reiten \cite{AR1}. A dualising $R$-variety can be considered as an analogue of the category of finitely generated projective modules over an artin algebra, but with possibly infinitely many indecomposable objects, up to isomorphisms. Let $\CX$ be an additive $R$-linear essentially small category. $\CX$ is called a dualising $R$-variety if the functor $\Mod\CX \lrt \Mod\CX^{\op}$ taking $F$ to $DF$, induces a duality $\mmod\CX \lrt \mmod\CX^{\op}$. Note that $D( - ):=\Hom_R( - ,E)$, where $E$ is the injective envelope of $R/{\rm rad}R$. If $\CX$ is a dualising $R$-variety, then $\mmod\CX$ and $\mmod\CX^{\op}$ are abelian categories with enough projectives and injectives \cite[Theorem 2.4]{AR1}.

\begin{remark}
Let $\CX$ be a dualising $R$-variety. Then
\begin{itemize}
\item [$(i)$] $\mmod\CX$ is a dualising $R$-variety \cite[Proposition 2.6]{AR1}.
\item [$(ii)$] Every functorially finite subcategory of $\CX$ is again a dualising $R$-variety \cite[Theorem 2.3]{AS}; see also \cite[Proposition 1.2]{I}.
\end{itemize}
\end{remark}

\begin{remark}\label{Dualising R-Variety}
The most basic example of a dualising $R$-variety is the category $\prj\La$, the category of finitely generated projective $\La$-modules, where $\La$ is an artin algebra \cite[Proposition 2.5]{AR1}. Since $\mmod\La\cong \mmod(\prj\La)$, by the above remark, $\mmod\La$ and also every functorially finite subcategory of it are also dualising $R$-varieties.
\end{remark}

\subsection{Stable categories}
Let $\CA$ be an abelian category with enough projective objects. Let $\CX$ be a subcategory of $\CA$ containing $\Prj\CA$, the full subcategory of $\CA$ consisting of projective objects. The stable category of $\CX$ denoted by $\underline{\CX}$ is a category whose objects are the same as those of
$\CX$, but the hom-set ${\underline{\CX}} (\underline{X}, \underline{X}')$ of $X, X' \in \CX$ is defined as $\underline{\CX}(\underline{X}, \underline{X}'):= \frac{{\CA}(X, X')}{\CP(X, X')}$, where $\CP(X, X')$ consists of all morphisms from $X$ to $X'$ that factor through a projective object. We have the canonical functor $\pi:\CX \rt \underline{\CX}$, defined by identity on objects but morphism $f:X\rt Y$ will be sent to the residue class $\underline{f}:=f+{\CP}(X, X').$\\

Throughout the paper, $\La$ denotes an artin algebra over a commutative artinian ring $R$, $\Mod\La$ denotes the category of all right $\La$-modules and $\mmod\La$ denotes its full subcategory consisting of all finitely presented modules. Moreover, $\Prj\La$, resp. $\prj\La$, denotes the full subcategory of $\Mod\La$, resp. $\mmod\La$, consisting of projective, resp. finitely generated projective, modules. Similarly, the subcategories $\Inj\La$ and $\inj\La$ are defined. $D( - ):=\Hom_R( - ,E)$, where $E$ is the injective envelope of $R/{\rm rad}R$, denotes the usual duality. For a $\La$-module $M$, we let $\add M$ denote the class of all modules that are isomorphic to a direct summand of a finite direct sum of copies of $M$.

\section{Relative Auslander Formula}\label{RelAusFormula}
Let us begin this section with the following proposition that is an immediate consequence of Proposition 2.1 of \cite{Au1}.

\begin{proposition}\label{3.2}
Let $\CA$ be an abelian category and $\CX$ be a full subcategory of $\CA$ that admits weak kernels. Consider the Yoneda embedding $Y: \CX \lrt \mmod\CX$. Then given any abelian category $\DD$, the induced functor $Y^{\DD}: (\mmod \CX , \DD) \lrt (\CX, \DD)$ has a left adjoint $Y^{\DD}_{\la}$ such that for each $F \in (\CX,\DD)$, $Y^{\DD}_{\la}(F)$ is right exact and
$Y^{\DD}_{\la}(F)Y=F$.
\end{proposition}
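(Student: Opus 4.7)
The plan is to construct $Y^{\DD}_{\la}$ explicitly by prescribing it via chosen finite presentations. Given $F \in (\CX,\DD)$ and $G \in \mmod\CX$, fix a presentation
$$\CX(-,X_1)\st{\CX(-,\varphi)}\lrt \CX(-,X_0) \lrt G \lrt 0,$$
where, by Yoneda, the first map comes from a genuine morphism $\varphi:X_1 \rt X_0$ in $\CX$. Define
$$Y^{\DD}_{\la}(F)(G):=\Coker\bigl(F(X_1)\st{F(\varphi)}\lrt F(X_0)\bigr)$$
in $\DD$. For a morphism $G \rt G'$ in $\mmod\CX$, projectivity of the representables $\CX(-,X)$ in $\mmod\CX$ (Yoneda) allows one to lift it to a chain map between chosen presentations; applying $F$ and passing to cokernels yields the morphism $Y^{\DD}_{\la}(F)(G)\rt Y^{\DD}_{\la}(F)(G')$. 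Since $\CX$ admits weak kernels, $\mmod\CX$ is abelian and any two such lifts are chain homotopic in $\CX$ via morphisms that genuinely exist in $\CX$ (again by Yoneda); applying $F$ then shows the induced map on cokernels is independent of the lift and that the construction is independent of the chosen presentations, up to canonical isomorphism.

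The identity $Y^{\DD}_{\la}(F)Y=F$ is then immediate: taking $G=\CX(-,X)$ with the trivial presentation $0\rt \CX(-,X)\rt \CX(-,X)\rt 0$ gives cokernel equal to $F(X)$. Right exactness of $Y^{\DD}_{\la}(F)$ follows by a standard horseshoe-type argument: given a short exact sequence $0\rt G'\rt G\rt G''\rt 0$ in $\mmod\CX$, one assembles compatible presentations (using projectivity of representables to lift a presentation of $G''$ through $G$) so that the resulting $3\times 3$ diagram of representables, mapped by $F$ into $\DD$, yields a right exact sequence of cokernels.

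For the adjunction, the counit of $(Y^{\DD}_{\la},Y^{\DD})$ at $F$ is just the identity $F\st{=}\lrt Y^{\DD}(Y^{\DD}_{\la}(F))= Y^{\DD}_{\la}(F)\circ Y$, and the bijection
$$(\mmod\CX,\DD)\bigl(Y^{\DD}_{\la}(F),\,H\bigr)\,\st{\sim}\lrt\,(\CX,\DD)\bigl(F,\,HY\bigr)$$
is given by restriction along $Y$. Its inverse sends $\beta:F\rt HY$ to the transformation $\tilde\beta_G$ defined as follows: compose $\beta_{X_0}:F(X_0)\rt H(\CX(-,X_0))$ with $H$ applied to the quotient $\CX(-,X_0)\twoheadrightarrow G$; by naturality of $\beta$ at $\varphi$ together with the fact that the composite $\CX(-,X_1)\rt G$ is zero, this map annihilates the image of $F(\varphi)$ and hence factors through $\Coker F(\varphi)=Y^{\DD}_{\la}(F)(G)$.

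The main obstacle, and the only place where routine verification becomes delicate, is establishing well-definedness and naturality of $\tilde\beta_G$ independently of the chosen presentation, and symmetrically checking functoriality of $G\mapsto Y^{\DD}_{\la}(F)(G)$. Both reduce to the same point: any two presentations of $G$ are connected by a chain map which, thanks to weak kernels in $\CX$, lifts from $\mmod\CX$ back to genuine morphisms in $\CX$, so that applying $F$ makes sense and gives a compatible comparison. This is exactly the subcategory-level analogue of the argument in \cite[Proposition 2.1]{Au1}; once it is in place, the triangle identities and the claim that $Y^{\DD}_{\la}(F)$ is right exact with $Y^{\DD}_{\la}(F)Y=F$ are formal consequences.
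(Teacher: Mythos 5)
Your construction is correct, and the points you single out as delicate (independence of the chosen presentations, via lifts and homotopies that exist in $\CX$ itself by Yoneda) are exactly the ones that need care; your description of the inverse to restriction along $Y$, and the horseshoe-style argument for right exactness, both go through. The route, however, differs from the paper's: the paper does not construct anything, it simply specializes Auslander's Proposition 2.1 of \cite{Au1}, taking there $\CA=\Mod\CX$ and $\CP=( - ,\CX)$, so that $\CP(\CA)=\mmod\CX$, which is abelian precisely because $\CX$ has weak kernels, and the statement is quoted as an immediate consequence. What you have written is in effect the proof of that cited proposition unfolded in this special case: you define $Y^{\DD}_{\la}(F)(G)$ as $\Coker\bigl(F(X_1)\st{F(\varphi)}\lrt F(X_0)\bigr)$ for a chosen presentation and verify functoriality, right exactness, $Y^{\DD}_{\la}(F)Y=F$, and the adjunction by hand. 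The paper's approach buys brevity and places the result in Auslander's general framework; yours buys a self-contained argument and, usefully, makes the explicit cokernel formula visible --- which is precisely the description the paper only extracts afterwards in Remark \ref{Definition of Va} when introducing $\va$. One terminological nitpick: the natural transformation $F \lrt Y^{\DD}(Y^{\DD}_{\la}(F))$ given by the identity is the \emph{unit} of the adjoint pair $(Y^{\DD}_{\la},Y^{\DD})$, not the counit; this does not affect your argument, since you verify the hom-set bijection directly.
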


\begin{proof}
Set $\CA=\Mod\CX$ and $\CP=( - , \CX)$ in the settings of Proposition 2.1 of \cite{Au1}. Then $\CP(\CA)=\mmod\CX$, which is an abelian category, because $\CX$ has week kernels. So the result follows immediately.
\end{proof}

\begin{remark}\label{Definition of Va}
Consider the same setting as in the above proposition. Following Auslander \cite{Au1}, set $\DD:=\CA$ and let $\ell:\CX \rt \CA$ be the inclusion. Hence $\ell$ can be extended to a right exact functor $Y^{\CA}_{\la}(\ell):\mmod\CX \lrt \CA$. Let us for simplicity denote $Y^{\CA}_{\la}(\ell)$ by $\va$.

We could provide an explicit interpretation of $\va$. First note that the Yoneda embedding $\CX \hookrightarrow \mmod\CX$, sending each $X \in \CX$ to $\CX( - ,X):=\CA( - ,X)\vert_{\CX}$, is a fully faithful functor. For each $X \in \CX$, $\CX( - ,X)$ is a projective object of $\mmod\CX$. Moreover, every projective object of $\mmod\CX$ is a direct summand of $\CX( - ,X)$, for some $X \in \CX$.

Let $F \in \mmod \CX$ and $\CX( - ,X_1) \st{(-,d)} \lrt \CX( - ,X_0) \lrt F \lrt 0$ be a projective presentation of $F$, where $X_0$ and $X_1$ are in $\CX$. Apply $\va$ and use the fact that by the above proposition $\va(\CX( - ,X))=X$, for all $X$ in $\CX$, we infer that $\va(F)$ is determined by the exact sequence
$$X_1 \st{d}{\lrt} X_0 \lrt \va(F) \lrt 0.$$
Moreover, if $f: F \lrt F'$ is a morphism in $\mmod \CX$, then clearly it can be lifted to a morphism between their projective presentations. Yoneda lemma now come to play for the projective terms to provide unique morphisms on $X_1$ and $X_0$. These morphisms then induce a morphism $\va(F) \lrt \va(F')$, which is exactly $\va(f)$.
\end{remark}

\begin{lemma}\label{va is exact}
Let $\CA$ be an abelian category with enough projective objects and $\CX$ be a contravariantly finite subcategory of $\CA$ containing all projectives. Then $\va$ is an exact functor.
\end{lemma}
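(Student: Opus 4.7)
The plan is to establish exactness by first proving a sharper claim: $\va$ converts every projective resolution in $\mmod\CX$ into an exact complex in $\CA$. Once this is verified, exactness of $\va$ on short exact sequences follows via the horseshoe lemma and the long exact sequence in homology. Since $\va$ is a left adjoint by Proposition \ref{3.2}, it is automatically right exact, so the content lies in left exactness.

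For the key claim, fix $F \in \mmod\CX$ and a projective resolution $\cdots \to \CX(-,X_2) \to \CX(-,X_1) \to \CX(-,X_0) \to F \to 0$ (such resolutions exist because the representables are projective generators of $\mmod\CX$). Applying $\va$ produces a complex $\cdots \to X_2 \to X_1 \to X_0 \to \va(F) \to 0$ in $\CA$; exactness at $X_0$ and at $\va(F)$ is built into the description of $\va$ in Remark \ref{Definition of Va}. For exactness at $X_n$ with $n \geq 1$, set $K_n := \Ker(X_n \to X_{n-1})$ in $\CA$. The composite $X_{n+1} \to X_n \to X_{n-1}$ vanishes in $\CA$, so $X_{n+1} \to X_n$ factors through $K_n$, and the task is to show this factorization $X_{n+1} \to K_n$ is an epimorphism. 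To do so I would pick, using enough projectives of $\CA$, an epimorphism $P \twoheadrightarrow K_n$ with $P \in \Prj\CA \subseteq \CX$. The composite $P \to K_n \hookrightarrow X_n$ has composition zero with $X_n \to X_{n-1}$, and passing through the Yoneda embedding produces a morphism $\CX(-,P) \to \CX(-,X_n)$ landing in $\Ker(\CX(-,X_n) \to \CX(-,X_{n-1})) = \im(\CX(-,X_{n+1}) \to \CX(-,X_n))$ (by exactness of the resolution). Projectivity of $\CX(-,P)$ in $\mmod\CX$ lifts this to $\CX(-,P) \to \CX(-,X_{n+1})$, which by Yoneda corresponds to a morphism $P \to X_{n+1}$ yielding the required factorization $P \to X_{n+1} \to K_n$. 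Since $P \twoheadrightarrow K_n$, this forces $X_{n+1} \twoheadrightarrow K_n$.

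With the key claim in hand, consider a short exact sequence $0 \to F' \to F \to F'' \to 0$ in $\mmod\CX$ and apply the horseshoe lemma in the abelian category $\mmod\CX$ to obtain a short exact sequence of projective resolutions with $X_n = X'_n \oplus X''_n$ and rows degreewise split exact in $\CX$. Applying $\va$ yields a degreewise split short exact sequence of chain complexes in $\CA$, each of which by the previous step is a resolution of $\va(F')$, $\va(F)$, $\va(F'')$ respectively and has vanishing homology in positive degrees. The long exact sequence in homology therefore collapses to $0 \to \va(F') \to \va(F) \to \va(F'') \to 0$, proving left exactness and hence exactness of $\va$. The main obstacle is the key resolution step, and this is precisely where both hypotheses, that $\CA$ has enough projective objects and that $\Prj\CA \subseteq \CX$, are indispensable: one needs a projective cover of $K_n$ that simultaneously lies in $\CX$ in order to run the Yoneda lifting argument.
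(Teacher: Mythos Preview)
Your proof is correct. The lifting argument for the key claim is sound: the crucial point is that $P \in \Prj\CA \subseteq \CX$ makes $\CX(-,P)$ both representable (so Yoneda applies) and projective in $\mmod\CX$ (so the lift exists), and the monicity of $K_n \hookrightarrow X_n$ lets you cancel to conclude $P \to X_{n+1} \to K_n$ equals the chosen epimorphism $P \twoheadrightarrow K_n$.

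The paper takes a shorter route. Rather than proving that $\va$ turns \emph{every} projective resolution into an exact complex, it builds one particular partial resolution $\CX(-,X_2) \to \CX(-,X_1) \to \CX(-,X_0) \to F \to 0$ for which exactness of $X_2 \to X_1 \to X_0$ at $X_1$ is guaranteed by construction: take any presentation $\CX(-,X_1) \xrightarrow{(-,d)} \CX(-,X_0) \to F \to 0$, form $K = \Ker d$ in $\CA$, and choose a surjective $\CX$-approximation $X_2 \twoheadrightarrow K$ (surjectivity uses $\Prj\CA \subseteq \CX$). This single observation gives $L_1\va(F)=0$ for all $F$, and exactness of $\va$ follows immediately from the long exact sequence of left derived functors, bypassing the horseshoe lemma entirely. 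So the paper exploits the contravariantly finite hypothesis constructively to pick a convenient resolution, whereas you use it only to know $\mmod\CX$ is abelian and instead work with an arbitrary resolution via the projective-lifting trick. Your approach proves slightly more (all $L_i\va$ vanish, not just $L_1\va$) at the cost of a longer argument; the paper's approach is terser but leans on the derived-functor formalism.
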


\begin{proof}
Since $\CX$ is contravariantly finite, it admits weak kernels and hence $\mmod\CX$ is an abelian category. Since $F \in \mmod\CX$ and $\CX$ is contravariantly finite and contains projectives, there exists an exact sequence
$\CX( - ,X_2) \lrt \CX( - ,X_1) \lrt \CX( - ,X_0) \lrt F \lrt 0$ in $\mmod\CX$ such that the induced sequence $X_2 \lrt X_1 \lrt X_0$ is exact. So by applying $\va$, we get the exact sequence $$\va(\CX( - ,X_2)) \lrt \va(\CX( - ,X_1)) \lrt \va(\CX( - ,X_0)).$$ So $L_1\va(F)$, the first left derived functor of $F$, vanishes. Since this happens for all $F \in \mmod\CX$, we deduce that $\va$ is exact.
\end{proof}

Our aim now is to show that if $\CA=\mmod A$, where $A$ is a right coherent ring and if $\CX$ is a contravariantly finite subcategory of $\mmod A$ containing $\prj A$, then $\va$ has a left adjoint $\va_{\la}$ and a fully faithful right adjoint $\va_{\rho}$. Let us first introduce the candidates for adjoint functors.

\s \label{adjoint functors} Let $M \in \mmod A$. To define $\va_{\la}$, let $A^n  \st{d}{\lrt} A^m \st{\varepsilon}{\lrt} M \lrt 0$ be a projective presentation of $M$ and set
\[\va_{\la}(M):= \Coker(( - ,A^n) \lrt ( - ,A^m)).\]
For $M' \in \mmod A$ with projective presentation $A^{n'} \st{d'}{\lrt} A^{m'} \st{{\varepsilon}'}{\lrt} M' \lrt 0 $ and a morphism $f:M \rt M'$, we have the commutative diagram
\[\xymatrix{A^n \ar[r]^d \ar[d]^{f_1} & A^m \ar[r]^{\varepsilon} \ar[d]^{f_0} & M \ar[r] \ar[d]^f & 0 \\
 A^{n'} \ar[r]^{d'} & A^{m'} \ar[r]^{{\varepsilon}'} & M' \ar[r] & 0.}\]
Then, Yoneda lemma in view of the fact that $\CX$ contains projectives, induces a natural transformation $\va_{\la}(f): \va_{\la}(M) \rt \va_{\la}(M')$ by the following commutative diagram
\[\xymatrix{( - ,A^n) \ar[r]^{(-,d)} \ar[d] & ( - ,A^m) \ar[r] \ar[d] & \va_{\la}(M) \ar[r] & 0 \\
 ( - ,A^{n'}) \ar[r]^{(-,d')} & ( - ,A^{m'}) \ar[r] & \va_{\la}(M') \ar[r] & 0.}\]

A standard argument applies to show that $\va_{\la}(M)$ and $\va_{\la}(f)$ are independent of the choice of projective presentations of $M$ and $M'$ and also liftings of $f$.

Moreover, define \[\va_{\rho}(M):=(\mmod A)( - ,M)\vert_{\CX}=\Hom_A( - ,M)\vert_{\CX}.\]
We sometimes write $( -,M)\vert_{\CX}$ for $\Hom_A( - ,M)\vert_{\CX}$, where it is clear from the context. Note that if $M \in \CX$, $\Hom_A( - ,M)\vert_{\CX}=\CX( - ,M)$. \\

\begin{lemma}\label{full and faithful}
With the above assumptions, the functor $\va_{\rho}$ is full and faithful.
\end{lemma}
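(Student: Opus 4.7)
The key observation is that $A$ itself, as a finitely generated projective right $A$-module, lies in $\CX$, since $\CX$ was assumed to contain $\prj A$. Consequently, for every $M \in \mmod A$ evaluation at the object $A \in \CX$ gives $\va_{\rho}(M)(A) = \Hom_A(A, M) \cong M$ naturally in $M$; this single fact drives the entire proof.

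Faithfulness is then immediate: if $f : M \to M'$ satisfies $\va_{\rho}(f) = 0$, then in particular its $A$-component $\va_{\rho}(f)_A = \Hom_A(A, f)$ is zero, and under $\Hom_A(A,-) \cong \id_{\mmod A}$ this component is $f$ itself.

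For fullness, given a natural transformation $\eta : \va_{\rho}(M) \lrt \va_{\rho}(M')$, I would set $f : M \to M'$ to be the additive map corresponding to $\eta_A$ under the Yoneda isomorphism. First I would promote $f$ to a right $A$-module homomorphism by running naturality of $\eta$ against the right $A$-linear endomorphisms $A \to A$, $x \mapsto ax$ (for $a \in A$); these live in $\CX$ because $A \in \CX$ and $\CX$ is full, and they identify $f(m \cdot a)$ with $f(m) \cdot a$. Next, to check $\va_{\rho}(f) = \eta$, I would fix $X \in \CX$ and $g \in \Hom_A(X, M)$ and test the claimed equality element-wise: for each $x \in X$ the map $\phi_x : A \to X$ with $\phi_x(1) = x$ is a morphism in $\CX$, and naturality of $\eta$ at $\phi_x$ combined with the definition of $f$ yields $\eta_X(g)(x) = f(g(x))$; varying $x$ gives $\eta_X(g) = f \circ g = \va_{\rho}(f)_X(g)$, hence $\eta = \va_{\rho}(f)$.

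The argument presents no real obstacle. The containment $\prj A \subseteq \CX$ reduces everything to a classical Yoneda-type manipulation, and the only subtlety is upgrading the additive component $\eta_A$ to an $A$-linear map $f$, which is handled by the family of left-multiplication endomorphisms of $A$ above.
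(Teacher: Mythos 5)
Your argument is correct, but it follows a different route than the paper. The paper proves fullness by exploiting contravariant finiteness: given $\eta:\Hom_A(-,M)\vert_{\CX}\rt\Hom_A(-,M')\vert_{\CX}$, it chooses $\CX$-approximation presentations $X_1\rt X_0\rt M\rt 0$ and $X_1'\rt X_0'\rt M'\rt 0$, lifts $\eta$ through the resulting projective presentations in $\mmod\CX$, and then uses Yoneda on the lifted maps $X_i\rt X_i'$ to induce $f:M\rt M'$ on cokernels with $\va_{\rho}(f)=\eta$. You instead run the classical Yoneda-style evaluation at the regular module: since $A\in\prj A\subseteq\CX$ and $\CX$ is full, the component $\eta_A$ under $\Hom_A(A,-)\cong \mathrm{id}$ produces an additive map $f$, the left-multiplication endomorphisms of $A$ give $A$-linearity, and the morphisms $\phi_x:A\rt X$, $a\mapsto xa$, give $\eta_X(g)=f\circ g$ for all $X\in\CX$; faithfulness is the same evaluation-at-$A$ observation made explicit (the paper only remarks it "follows from the fact that $\CX$ contains projectives"). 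Your proof is more elementary and in fact slightly more general: it uses only $A\in\CX$, not contravariant finiteness of $\CX$ (which in the paper's setting is what guarantees $\va_{\rho}(M)$ lies in $\mmod\CX$ in the first place, but is not needed for fullness and faithfulness of the restricted-Hom assignment). The paper's approximation-based argument, on the other hand, meshes with the machinery of presentations used throughout Section 3 (the definition of $\va$, the adjunctions, and Lemma 4.8), which is presumably why the authors phrase it that way.
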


\begin{proof}
Its faithfulness is easy and follows from the fact that $\CX$ contains projectives. We provide a proof for the fullness. Let
$\eta: \Hom_A( - ,M)\vert_{\CX} \lrt \Hom_A( - ,M')\vert_{\CX}$ be a morphism in $\mmod\CX$. Since $\CX$ is contravariantly finite, we have exact sequences $X_1 \lrt X_0 \lrt M \lrt 0$ and
$X'_1 \lrt X'_0 \lrt M' \lrt 0$ of $A$-modules such that $X_0, X'_0, X_1, X'_1 \in \CX$ and the induced sequences
\[\xymatrix{\Hom_A( - ,X_1) \ar[r]  & \Hom_A( - ,X_0) \ar[r]  & \Hom_A( - ,M)\vert_{\CX} \ar[r] \ar[d]^{\eta} & 0 \\
 \Hom_A( - ,X'_1) \ar[r] & \Hom_A( - ,X'_0) \ar[r] & \Hom_A( - ,M')\vert_{\CX} \ar[r] & 0,}\]
are exact on $\CX$. Since $( - ,X_i)$ is projectives for $i=0, 1$, $\eta$ lifts to morphisms $\eta_0: \Hom_A( - ,X_0) \rt \Hom_A( - ,X'_0)$ and $\eta_1: \Hom_A( - ,X_1) \rt \Hom_A( - ,X'_1)$ making the diagram commutative. By Yoneda lemma, we get the commutative diagram
\[\xymatrix{X_1 \ar[r] \ar[d] & X_0 \ar[r] \ar[d] & M \ar[r] & 0 \\
X'_1 \ar[r] & X'_0 \ar[r] & M' \ar[r] & 0,}\]
This induces a morphism $f: M \rt M'$. It is easy to check that $\va_{\rho}(f)=\eta$. Therefore $\va_{\rho}$ is full.
\end{proof}

\begin{proposition}\label{right and left adjoint}
Let $A$ be a right coherent ring and $\CX$ be a contravariantly finite subcategory of $\mmod A$ containing $\prj A$. Then the functors $\va_{\la}$ and $\va_{\rho}$ are respectively the left and the right adjoins of the functor $\va$ defined in Remark \ref{Definition of Va}.
\end{proposition}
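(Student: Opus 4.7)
The plan is to verify the two adjunctions separately using standard Yoneda-style computations applied to projective presentations. The key observation is that, since $\CX$ contains $\prj A$ and in particular the regular module $A$, evaluation at $A^k$ of a functor $F\in\mmod\CX$ coincides with applying $\Hom_A(A^k,-)$ to the $A$-module $\va(F)$; this is the bridge between the two sides of each adjunction.

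\textbf{Right adjoint.} Given $F\in\mmod\CX$ with projective presentation $\CX(-,X_1)\to\CX(-,X_0)\to F\to 0$, applying the right exact functor $\va$ yields an exact sequence $X_1\to X_0\to\va(F)\to 0$ in $\mmod A$ by Remark \ref{Definition of Va}. Applying $\Hom_A(-,M)$ to this presentation and $\Hom_{\mmod\CX}(-,\va_{\rho}(M))$ to the original one produces, respectively,
\[0\to\Hom_A(\va(F),M)\to\Hom_A(X_0,M)\to\Hom_A(X_1,M),\]
\[0\to\Hom_{\mmod\CX}(F,\va_{\rho}(M))\to\Hom_{\mmod\CX}(\CX(-,X_0),(-,M)|_{\CX})\to\Hom_{\mmod\CX}(\CX(-,X_1),(-,M)|_{\CX}).\]
Yoneda's lemma identifies the second sequence with the first term by term, producing a natural isomorphism $\Hom_A(\va(F),M)\cong\Hom_{\mmod\CX}(F,\va_{\rho}(M))$. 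I would then check that the isomorphism is natural in both variables by lifting morphisms to projective presentations, and independent of the chosen presentation by the usual comparison argument.

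\textbf{Left adjoint.} Given $M\in\mmod A$ with projective presentation $A^n\xrightarrow{d}A^m\to M\to 0$ in $\mmod A$, we have by definition $\va_{\la}(M)=\Coker((-,A^n)\to(-,A^m))$ in $\mmod\CX$. For any $F\in\mmod\CX$, applying $\Hom_{\mmod\CX}(-,F)$ and Yoneda yields
\[0\to\Hom_{\mmod\CX}(\va_{\la}(M),F)\to F(A^m)\to F(A^n).\]
On the other hand, applying $\Hom_A(-,\va(F))$ to the projective presentation of $M$ yields
\[0\to\Hom_A(M,\va(F))\to\Hom_A(A^m,\va(F))\to\Hom_A(A^n,\va(F)).\]
The crucial identification is $F(A^k)\cong\Hom_A(A^k,\va(F))$: since $A\in\CX$, evaluating the defining presentation $\CX(-,X_1)\to\CX(-,X_0)\to F\to 0$ at $A$ gives $X_1\to X_0\to F(A)\to 0$ (as $\CX(A,X_i)=\Hom_A(A,X_i)=X_i$), which coincides with the presentation $X_1\to X_0\to\va(F)\to 0$ of $\va(F)$; so $F(A)=\va(F)=\Hom_A(A,\va(F))$, and the additive extension handles $A^k$. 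Consequently the two exact sequences above agree, giving a natural isomorphism $\Hom_{\mmod\CX}(\va_{\la}(M),F)\cong\Hom_A(M,\va(F))$.

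The only nontrivial pieces are the well-definedness of $\va_{\la}$ on morphisms (already handled in \ref{adjoint functors}) and naturality of both isomorphisms, which follows from the functoriality of projective lifts together with Yoneda. I expect no serious obstacle: the main subtlety is the identification $F(A^k)=\Hom_A(A^k,\va(F))$, which is precisely where the hypothesis $\prj A\subseteq\CX$ enters, and the rest is a careful bookkeeping of presentations.
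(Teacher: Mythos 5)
Your proposal is correct, and it establishes both adjunctions from the same raw ingredients as the paper (projective presentations, the Yoneda lemma, and the hypothesis $\prj A\subseteq\CX$), but it is organized along a genuinely different route. The paper constructs the adjunction bijections explicitly: a morphism $M\to\va(F)$ is lifted through the two presentations and Yoneda yields $\va_{\la}(M)\to F$, after which injectivity and surjectivity are checked by hand; for the right adjoint the assignment $f\mapsto \va_{\rho}(f)\delta$ is used, with $\delta\colon F\to(-,\va(F))\vert_{\CX}$ the canonical comparison map, and surjectivity rests on Lemma \ref{full and faithful} (fullness of $\va_{\rho}$). You instead realize both sides of each adjunction as kernels of maps between Hom-groups attached to the presentations and match the two left-exact sequences term by term via Yoneda; for the left adjoint the bridge is the identification $F(A^k)\cong\Hom_A(A^k,\va(F))$, which is exactly where $A\in\CX$ enters and which holds because evaluation at $A$ is exact on $\mmod\CX$ and agrees with $\va$ on representables. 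This buys a shorter, more functorial proof that bypasses the fullness lemma and the element-level injectivity/surjectivity checks; what it does not deliver for free is the explicit form of the unit and counit, which the paper's construction provides and later uses (the $\delta$ in the paper's proof is precisely the unit $\delta^{\va_{\rho}\va}_F$ appearing in Remark \ref{Two exact sequences}). The points you defer—naturality in both variables, independence of the chosen presentations, and compatibility of the Yoneda identifications with the differentials (in particular that your isomorphism $F(A)\cong\va(F)$ is natural with respect to all maps between free modules, so that $F(d)$ corresponds to precomposition with $d$)—are routine and no heavier than what the paper itself leaves to ``standard homological arguments'', so I see no genuine gap.
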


\begin{proof}
Fix projective presentations $( - ,X_1) \st{( - ,d)}{\lrt} ( - ,X_0) \st{\varepsilon}{\lrt} F \lrt 0$ and $A^n \lrt A^m \lrt M \lrt 0$ of $F \in \mmod\CX$ and $M \in \mmod A$. We first show that
$\va_{\la}$ is the left adjoint of $\va$. Define
$$\varphi_{M, F}: \Hom_{A}(M, \va(F))\lrt \Hom_{\mmod\CX}(\va_{\la}(M), F)$$
as follows. An $A$-morphism $f:M \rt \va(F)$ can be lifted to commute the following diagram
\[\xymatrix{ A^n \ar[r] \ar[d]^{f_1} & A^m \ar[r] \ar[d]^{f_0} & M \ar[r] \ar[d]^{f}  & 0 \\  X_1 \ar[r]^d & X_0 \ar[r]^{\pi} & \va(F) \ar[r] & 0.}\]

Yoneda lemma helps us to have the following diagram in $\mmod \CX$ such that the left square is commutative.
\[\xymatrix{  (-, A^n) \ar[r] \ar[d]^{(-,f_1)} & (-,A^m) \ar[r] \ar[d]^{(-,f_0)} & \va_{\la}(M) \ar[r]  & 0 \\
   (-, X_1) \ar[r] & (-,X_0) \ar[r] & F \ar[r] & 0}\]
So it induces a map $\sigma: \va_{\la}(M) \rt F$. Define $\varphi_{M,F}(f):=\sigma$. Standard homological arguments guarantee that $\varphi$ is well-defined. We show that it is an isomorphism. Assume that $\varphi_{M,F}(f)=\sigma=0.$ So there exists an $A$-morphism $S=( - ,s):( -, A^m) \lrt ( - ,X_1)$ such that the lower triangle is commutative
\[\xymatrix{ (-, A^n) \ar[r] \ar[d]_{(-,f_1)} & (-,A^m) \ar[d]^{(-,f_0)} \ar[ld]_{(-,s)}&  \\ (-, X_1) \ar[r] & (-,X_0)  & }\]
So by Yoneda we get the following diagram
\[\xymatrix{A^n \ar[r] \ar[d]^{f_1} & A^m \ar[r] \ar[d]^{f_0}\ar[ld]_{s} & M \ar[r] \ar[d]^{f}   & 0 \\ X_1 \ar[r] & X_0 \ar[r] & \va(F) \ar[r] & 0.}\]
where the lower triangle is commutative. This in turn implies that $f=0$. So $\varphi_{M,F}$ is one to one. One can follow similar argument to see that $\varphi_{M,F}$ is also surjective and hence is an isomorphism.

To show that $\va_{\rho}$ is the right adjoint of $\va$, define with the same $F$ and $M$ as above, $$\psi_{M, F}:\Hom_{A}(\va(F), M) \rt \Hom_{\mmod\CX}(F,\va_{\rho}(M)),$$ by sending a morphism $f:\va(F)\rt M$ to $\va(f)\delta$, where $\delta$ is the unique morphism that is obtained from the universal property of the cokernels in the following diagram
\[\xymatrix{ (-, X_1)\ar[r]^{( - ,d)}  & (-,X_0) \ar[r]^{\varepsilon} \ar[d]_{(-,\pi)} & F\ar[r] \ar[ld]^{\delta} & 0 \\ & (-,\va(F)) & &}\]

We claim that $\psi_{M, F}$ is an isomorphism. Assume that $\va(f)\delta=0.$ This in turn yields that $(-,f)(-,\pi)=0$. So $f\pi=0,$ that implies $f=0$, because $\pi$ is a surjective map. Therefore $\psi_{M, F}$ is a monomorphism.

To show that it is also surjective, pick a natural transformation $\gamma: F \rt \va_{\rho}(M)$. By Lemma \ref{full and faithful}, $\gamma\varepsilon$ can be presented by a unique map say, $g:X_0 \rt M$. But $gd=0$ and hence there exists a unique morphism $h:\va(F) \rt M$ such that $h\pi=g$. It is obvious then that $\psi_{M, F}(h)=\gamma$.
\end{proof}

Set $\mmodd\CX:=\Ker\va$, the full subcategory of $\mmod\CX$ consisting of all functors $F$ such that $\va(F)=0$. This is equivalent to say that $\mmodd\CX$ consists of all functors that vanish on $\La$ or equivalently on all finitely generated projective $\La$-modules. Since $\va$ is exact, $\mmodd\CX$ is a Serre subcategory of $\mmod\CX$. Moreover the inclusion functor $\mmodd\CX \rt \mmod\CX$ is exact.

\begin{theorem}\label{main}
Let $A$ be a right coherent ring and $\CX$ be a contravariantly finite subcategory of $\mmod A$ containing $\prj A$. Then there exists a recollement
\[\xymatrix{\mmodd \CX \ar[rr]^{i}  && \mmod \CX \ar[rr]^{\va} \ar@/^1pc/[ll]^{i_{\rho}} \ar@/_1pc/[ll]_{i_{\la}} && \mmod A \ar@/^1pc/[ll]^{\va_{\rho}} \ar@/_1pc/[ll]_{\va_{\la}} }\]
of abelian categories. In particular, $$\frac{\mmod \CX}{\mmodd \CX}\simeq \mmod A.$$
\end{theorem}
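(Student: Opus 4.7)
The plan is to invoke Remark \ref{existence of recollements} directly: once we have an exact functor $\va : \mmod\CX \to \mmod A$ with both a left adjoint and a right adjoint, at least one of which is fully faithful, we automatically obtain a recollement $(\Ker\va, \mmod\CX, \mmod A)$. Fortunately, all three of these ingredients have already been assembled in the preceding results of the section.

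First I would note that $\va$ is exact by Lemma \ref{va is exact}, using that $\CX$ is contravariantly finite in $\mmod A$ and contains $\prj A$ (so that every $F \in \mmod \CX$ admits a three-term projective resolution whose image under $\va$ remains exact). Next I would observe that Proposition \ref{right and left adjoint} supplies both adjoints $\va_{\la}$ and $\va_{\rho}$ of $\va$, and that Lemma \ref{full and faithful} shows $\va_{\rho}$ is fully faithful. By the general fact recorded just before Remark \ref{existence of recollements}, full-faithfulness of $\va_{\rho}$ is equivalent to that of $\va_{\la}$, so the hypotheses of Remark \ref{existence of recollements} are met. Applying it yields a recollement
\[\xymatrix{\Ker\va \ar[rr]^{i}  && \mmod \CX \ar[rr]^{\va} \ar@/^1pc/[ll]^{i_{\rho}} \ar@/_1pc/[ll]_{i_{\la}} && \mmod A \ar@/^1pc/[ll]^{\va_{\rho}} \ar@/_1pc/[ll]_{\va_{\la}} }\]
with $i$ the inclusion and $i_{\la}, i_{\rho}$ its left and right adjoints produced by the recollement machinery.

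To finish, I would identify $\Ker\va$ with the category $\mmodd\CX$ defined immediately before the theorem. By construction, $\mmodd\CX$ is the full subcategory of $\mmod\CX$ whose objects admit a presentation $\CX(-,X_1) \to \CX(-,X_0) \to F \to 0$ with $X_1 \to X_0$ an epimorphism in $\mmod A$; by the explicit description of $\va$ in Remark \ref{Definition of Va}, $\va(F) = \Coker(X_1 \to X_0)$, so $F \in \Ker\va$ precisely when $X_1 \to X_0$ is surjective, giving $\Ker\va = \mmodd\CX$. The quotient equivalence $\mmod\CX / \mmodd\CX \simeq \mmod A$ is then the standard consequence of the recollement, as recorded in Subsection \ref{Recollements}: $\va$ induces an equivalence $\mmod\CX / \Ker\va \simeq \mmod A$ since $\va$ is exact with fully faithful left and right adjoints.

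The only non-formal point is the equality $\Ker\va = \mmodd\CX$, but this is immediate from the explicit formula for $\va$ on a two-term projective presentation; everything else is bookkeeping once the previous lemmas are in place. Thus I would not expect any genuine obstacle: the entire argument is an application of the abstract recollement criterion of Remark \ref{existence of recollements} to the adjoint triple $(\va_{\la}, \va, \va_{\rho})$ built in this section.
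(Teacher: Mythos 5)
Your proposal is correct and follows essentially the same route as the paper: exactness of $\va$ (Lemma \ref{va is exact}), the adjoint triple from Proposition \ref{right and left adjoint}, full faithfulness of $\va_{\rho}$ from Lemma \ref{full and faithful}, and then Remark \ref{existence of recollements} to produce the recollement and the quotient equivalence. The only difference is that you verify $\Ker\va=\mmodd\CX$ explicitly, whereas the paper simply defines $\mmodd\CX$ as $\Ker\va$ just before the theorem; your check is a harmless (and correct) addition.
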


\begin{proof}
By Proposition \ref{right and left adjoint}, $\va: \mmod\CX \lrt \mmod A$ has a left and a right adjoint. Hence by Remark \ref{existence of recollements} to deduce that the recollement exists, we just need to show that either $\va_{\la}$ or $\va_{\rho}$, and hence both of them, is full and faithful. This follows from Lemma \ref{full and faithful}. Hence the proof of the existence of recollement is complete. The equivalence is just an immediate consequence of the recollement. Hence we are done.
\end{proof}

The equivalence $\frac{\mmod \CX}{\mmodd \CX}\simeq \mmod A$ will be called the  relative Auslander's formula. In case $\CX=\mmod A$, we get the known (absolute) formula.

\begin{remark}
Recently, Ogawa \cite[Corollary 2.8]{O} shows that if $A$ is a finite-dimensional $k$-algebra, where $k$ is a commutative field, and $\CX$ is a functorially finite subcategory of $\mmod A$ containing $A$, then there exists a recollement
\[\xymatrix{\mmod \underline{\CX} \ar[rr]^{e}  && \mmod\CX \ar[rr]^{q} \ar@/^1pc/[ll]^{e_{\rho}} \ar@/_1pc/[ll]_{e_{\la}} && \mmod A. \ar@/^1pc/[ll]^{q_{\rho}} \ar@/_1pc/[ll]_{q_{\la}} }\]
This, in turn, implies that there exists an equivalence
\[\frac{\mmod\CX}{\mmod \underline{\CX}}\simeq \mmod A.\]
He calls this formula `generalized Auslander's formula'. We will see in Proposition \ref{stabel=mmodd} that $\mmod \underline{\CX}$ is equivalent to $\mmodd \CX$.
\end{remark}

Analogously Theorem \ref{main} can be stated for $\CX\mbox{-}{\rm mod}$, the category of finitely presented covariant functors.

\begin{theorem}\label{main2}
Let $A$ be a right coherent ring and $\CX$ be a covariantly finite subcategory of $\mmod A$ containing $\inj A$. Then, there exists a recollement
\[\xymatrix{\CX{\mbox{-}\rm{mod_0}} \ar[rr]^{i'}  && \CX\mbox{-}{\rm mod} \ar[rr]^{\va'} \ar@/^1pc/[ll]^{i'_{\rho}} \ar@/_1pc/[ll]_{i'_{\la}} && (\mmod A)^{\op} \ar@/^1pc/[ll]^{\va'_{\rho}} \ar@/_1pc/[ll]_{\va'_{\la}} }\]
of abelian categories, where $\CX{\mbox{-}\rm{mod_0}}=\Ker\va'$ is the full subcategory of $\CX{\mbox{-}\rm{mod}}$ consisting of all functors that vanish on injective modules.
\end{theorem}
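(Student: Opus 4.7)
The plan is to mirror the proof of Theorem \ref{main} by dualizing throughout. Under the identification $\CX\mbox{-}{\rm mod}\cong\mmod(\CX^{\op})$, the opposite subcategory $\CX^{\op}$ is contravariantly finite in the abelian category $(\mmod A)^{\op}$ and contains its projectives, which are precisely the objects of $\inj A$. Crucially, the arguments in Proposition \ref{3.2}, Lemmas \ref{va is exact} and \ref{full and faithful}, and Proposition \ref{right and left adjoint} only use that the ambient abelian category has enough projectives and that the subcategory is contravariantly finite containing them, so they transfer to this dualized setup even though $(\mmod A)^{\op}$ is not itself of the form $\mmod B$.

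Concretely, for $F\in\CX\mbox{-}{\rm mod}$ with covariant projective presentation
\[\CX(X,-)\vert_{\CX}\lrt \CX(Y,-)\vert_{\CX}\lrt F\lrt 0,\]
corresponding by Yoneda to a morphism $g:Y\to X$ in $\CX$, I would define $\va'(F):=\Ker(g)$, viewed as an object of $(\mmod A)^{\op}$, and check as in Remark \ref{Definition of Va} that this is independent of the chosen presentation and functorial in $F$. For each $M\in\mmod A$ one picks an injective copresentation $0\to M\to I^0\to I^1$ in $\mmod A$ with $I^0,I^1\in\CX$, available because $\inj A\subseteq\CX$ and $\CX$ is covariantly finite, and sets
\[\va'_{\la}(M):=\Coker\bigl((I^1,-)\vert_{\CX}\lrt(I^0,-)\vert_{\CX}\bigr),\qquad \va'_{\rho}(M):=\Hom_A(M,-)\vert_{\CX}.\]

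Exactness of $\va'$ will be the dual of Lemma \ref{va is exact}, full-faithfulness of $\va'_{\rho}$ will be the dual of Lemma \ref{full and faithful}, and the adjunctions $(\va'_{\la},\va')$ and $(\va',\va'_{\rho})$ will follow by reversing the arrows in the two diagram-chases of Proposition \ref{right and left adjoint}. Feeding the resulting exact functor $\va'$ with its two adjoints into Remark \ref{existence of recollements} then produces the claimed recollement, with the leftmost category identified as the Serre subcategory $\CX{\mbox{-}\rm{mod_0}}=\Ker\va'$.

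The hard part will be bookkeeping: one must consistently track how covariant Yoneda reverses morphisms in contrast with the contravariant case, and how short exact sequences in $(\mmod A)^{\op}$ translate back to sequences in $\mmod A$, so that $\va'$, $\va'_{\la}$, and $\va'_{\rho}$ carry the correct variance and every adjunction is oriented correctly. The only further technical point is ensuring that for every $M\in\mmod A$ a copresentation with first two terms in $\inj A\subseteq\CX$ exists; this is automatic for artin algebras via the duality $D$, and is precisely the role played by the standing hypotheses that $\CX$ is covariantly finite and contains $\inj A$.
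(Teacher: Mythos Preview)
Your proposal is correct and follows essentially the same approach as the paper's own proof: both define $\va'(F)$ as the kernel of the morphism arising from a covariant projective presentation, set $\va'_{\rho}(M)=\Hom_A(M,-)\vert_{\CX}$ and $\va'_{\la}(M)=\Coker((I^1,-)\to(I^0,-))$ from an injective copresentation of $M$, then dualize the arguments of Lemma~\ref{va is exact}, Lemma~\ref{full and faithful}, and Proposition~\ref{right and left adjoint} before invoking Remark~\ref{existence of recollements}. Your additional framing via $\CX^{\op}\subseteq(\mmod A)^{\op}$ is a helpful conceptual wrapper but not a different route.
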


\begin{proof}
Let $F \in \CX{\mbox{-}\rm{mod}}$. Pick a projective presentation $(X_1,-) \rt (X_0,-) \rt F \rt 0$ of $F$ and define $\va'(F):=\Ker(X_0 \rt X_1)$.

On the other hand, for $M \in \mmod A$ define $\va'_{\rho}(M):=(M,-)\vert_{\CX}$ and $\va'_{\la}(M):=\Coker((I^1,-) \rt (I^0,-))$, where $0 \rt M \rt I^0 \rt I^1$ is an injective copresentation of $M$. One should now follow similar, or rather dual, argument as we did, to prove that $\va'$ is exact, $(\va'_{\la}, \va')$ and $(\va', \va'_{\rho})$ are adjoint pairs and $\va'_{\rho}$ is fully faithful and hence deduce from Remark \ref{existence of recollements} that recollement exists. We leave the details to the reader.
\end{proof}

\begin{remark}\label{Two exact sequences}
By Remark \ref{Exact sequences}, two exact sequences can be derived from a recollement of abelian categories. Here we explicitly study these two exact sequences attached to the recollement of Theorem \ref{main}.

Let $F \in \mmod \CX$. By the same notation as in the Remark \ref{Exact sequences} for the units and counits of adjunctions, we have the following two exact sequences
\[0 \lrt ii_{\rho}(F) \st{\eta^{ii_{\rho}}_F} \lrt F \st{\delta^{\va_{\rho}\va}_F} \lrt \va_{\rho}\va(F) \lrt {\Coker}({\delta^{\va_{\rho}\va}_F}) \lrt 0;\]
\[0 \lrt \Ker(\eta^{\va_{\la}\va}_F) \lrt \va_{\la}\va(F) \st{\eta^{\va_{\la}\va}_F} \lrt F \st{\delta^{ii_{\la}}_F} \lrt ii_{\la}(F) \lrt  0. \]

As it is shown in the proof of Proposition \ref{right and left adjoint}, \[\va_{\rho}\va(F)=( - ,\va(F))\vert_{\CX}\] and \[\va_{\la}\va(F)=\Coker(( - ,A^n) \lrt ( - ,A^m)),\] where $A^n \lrt A^m \lrt \va(F) \lrt 0$  is a projective presentation of $\va(F)$.

Clearly $ii_{\rho}(F)$ and $ii_{\la}(F)$ both are in $\mmodd\CX$. Moreover, we may deduce from Remark \ref{Exact sequences} that ${\Coker}{\delta^{\va_{\rho}\va}_F}$ and $\Ker(\eta^{\va_{\la}\va}_F)$ belong to $\mmodd\CX.$
Putting together, we obtain the exact sequences
\[0 \lrt F_0 \lrt F \lrt ( - ,\va(F))\vert_{\CX} \lrt F_1 \lrt 0;\]
\[0 \lrt F_2 \lrt \va_{\la}\va(F) \lrt F \lrt F_3 \lrt 0,\]
where $F_0, F_1, F_2$ and $F_3$ are in $\mmodd\CX$.

It is worth to note that in case $\CX=\mmod \La$, where $\La$ is an artin algebra, the first exact sequence is exactly the fundamental exact sequence obtained by Auslander
\cite[Page 203]{Au1}.
\end{remark}

\begin{remark}
Let $\CX_1 \subseteq \CX_2$ be contravariantly finite subcategories of $\mmod A$ that both contain projectives. Let $F \in \mmod\CX_1$ and consider a projective presentation of $F$
\[( - ,X_1) \st{( - ,d)}{\lrt} ( - ,X_0) {\lrt} F \lrt 0.\]
Clearly we may write this presentation as
\[\Hom_A( - ,X_1)\vert_{\CX_1} \lrt \Hom_A( - ,X_0)\vert_{\CX_1} \lrt F \lrt 0.\]
This allow us to extend $F$ to $\CX_2$ and consider it as an object of $\mmod\CX_2$ by setting
\[\widetilde{F}=\Coker(\Hom_A( - ,X_1)\vert_{\CX_2} \lrt \Hom_A( - ,X_0)\vert_{\CX_2}).\]
Hence we can define a functor $\xi: \mmod\CX_1 \lrt \mmod\CX_2$ by $\xi(F)=\widetilde{F}$. It can be checked easily that $\xi\vert: \mmodd\CX_1 \lrt \mmodd\CX_2$ is a functor and hence we have the following morphism of recollements
\[\xymatrix@C=0.3cm@R=0.4cm{\mmodd\CX_1 \ar[dd]^{\xi\vert}\ar[rrr]  &&& \mmod\CX_1 \ar[rrr] \ar[dd]^{\xi} \ar@/_1pc/[lll] \ar@/^1pc/[lll] &&& \mmod A \ar[dd]^{\bar{\xi}} \ar@/_1pc/[lll] \ar@/^1pc/[lll]
\\ \\  \mmodd\CX_2 \ar[rrr]  &&& \mmod\CX_2  \ar[rrr] \ar@/^1pc/[lll] \ar@/_1pc/[lll] &&& \mmod A. \ar@/^1pc/[lll] \ar@/_1pc/[lll] }\]

\vspace{0.4cm}
In some sense the upper recollement is a sub-recollement of the lower one. Therefore, we have a partial order on the recollements and so Auslander in a sense has considered the maximum case $\CX=\mmod A$.
\end{remark}

\begin{remark}
Let $\La$ be a self-injective artin algebra over a commutative artinian ring $R$. By combining Theorems \ref{main} and \ref{main2}, we plan to construct auto-equivalences of $\mmod\La$. To this end, let $\CX$ be a functorially finite subcategory of $\mmod\La$ containing $\inj\La=\prj\La$. By \cite[Theorem 2.3]{AS}, $\CX$ is a dualising $R$-variety. So we have the following commutative diagram
\[\xymatrix{0 \ar[r] & \mmodd\CX \ar[r] \ar[d]^{D\vert} & \mmod \CX \ar[r] \ar[d]^{D} & \frac{\mmod \CX}{\mmodd \CX} \ar[r] \ar@{.>}[d]^{\overline{D}} & 0 \\
0 \ar[r] & \CX {\mbox{-}\rm{mod_0}} \ar[r] & \CX {\mbox{-}\rm{mod}} \ar[r] & \frac{\CX {\mbox{-}\rm{mod}}}{\CX {\mbox{-}\rm{mod_0}}} \ar[r]  & 0,}\]
where $D$ is the usual duality of $R$-varieties.
Hence the composition
\[D^{\CX}:  \ \ \mmod\La \st{\overline{\va}^{-1}} \lrt \frac{\mmod\CX}{\mmodd\CX} \st{\overline{D}}\lrt \frac{\CX{\mbox{-}\rm{mod}}}{\CX{\mbox{-}\rm{mod_0}}} \st{\overline{\va}'} \lrt (\mmod\La)^{\op} \st{^{\op}}\lrt \mmod \La, \]
denoted by $\mathcal{D}^{\CX}$ is an auto-equivalence of $\mmod\La$ with respect to $\CX$. Note that if $\CX=\prj\La$, then $D^{\rm{prj\La}}$ is the identity functor.
\end{remark}

\section{Examples }\label{Examples and applications}
In this section, we provide some examples of the recollements introduced in the previous section. Throughout the section $\La$ is an artin algebra. Let $\CX$ be a full subcategory of $\mmod\La$. The set of isoclasses of indecomposable modules of $\CX$ will be denoted by $\Ind\CX$.
$\CX$ is called of finite type if $\Ind\CX$ is a finite set. $\La$ is called of finite representation type if $\mmod\La$ is of finite type. If $\CX$ is of finite type then it admits a representation generator, i.e. there exists $X \in \CX$ such that $\CX=\add X$. It is known that $\add X$ is a contravariantly finite subcategory of $\mmod\La$. Set $\Ga(\CX)=\End_{\La}(X)$. Clearly $\Ga(\CX)$ is an artin algebra. It is known that the evaluation functor $\zeta_X:\mmod\CX \lrt \mmod\Ga(\CX)$ defined by $\zeta_X(F)=F(X)$, for $F \in \mmod\CX$, is an equivalence of categories. It also induces an equivalence of categories $\mmod\underline{\CX}$ and $\mmod\Ga(\underline{\CX})$. Recall that $\Ga(\underline{\CX})=\End_{\La}(X)/\CP$, where $\CP$ is the ideal of $\Ga(\CX)$ including morphisms factoring through projective modules.

The artin algebra $\Ga(\CX)$, resp. $\Ga(\underline{\CX})$, is called relative, resp. stable, Auslander algebra of $\La$ with respect to the subcategory $\CX$.\\

We need the following result in this section. Let $\pi:\CX \rt \underline{\CX}$ be the canonical functor. It then induces an exact functor $\mathfrak{F}: \Mod\underline{\CX} \rt \Mod\CX$. It is not hard to see that $\mathfrak{F}$ in turn induces an equivalence between $\Mod\underline{\CX}$ and the full subcategory of $\Mod\CX$ consisting of functors vanishing on $\Prj\CA$.

\begin{proposition}\label{stabel=mmodd}
Let $\CA$ be an abelian category with enough projective objects and $\CX$ be a subcategory of $\CA$ containing $\Prj\CA$. Then we have the following commutative diagram
\[\xymatrix{\Mod\underline{\CX} \ar[r]^{\mathfrak{F} }& \Mod\CX \\ \mmod\underline{\CX} \ar@{^(->}[u] \ar[r]^{\mathfrak{F}{\vert}} &\mmodd\CX ,\ar@{^(->}[u] }\]
such that the lower row is an equivalence and the others are inclusions. If furthermore $\CX$ is contravariantly finite, then $\mmod\underline{\CX}$ is an abelian category with enough projective objects.
\end{proposition}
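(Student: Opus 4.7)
The plan rests on the observation recorded just before the statement: that $\mathfrak{F}$ identifies $\Mod\underline{\CX}$ with the full subcategory $\CE\subseteq\Mod\CX$ consisting of functors vanishing on $\Prj\CA$. The diagram commutes on the nose and both verticals are obvious full inclusions, so the real substance is to show that the restricted functor $\mathfrak{F}\vert$ takes values in $\mmodd\CX$ and is essentially surjective onto it; full faithfulness then comes for free from the big equivalence.

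First I would re-characterise $\mmodd\CX$ intrinsically as those $F\in\mmod\CX$ whose restriction to $\Prj\CA$ is zero. Given a presentation $\CX(-,X_1)\to\CX(-,X_0)\to F\to 0$, Remark \ref{Definition of Va} gives $\va(F)=\Coker(X_1\to X_0)$; using enough projectives in $\CA$ to choose an epimorphism $Q\twoheadrightarrow X_0$ with $Q\in\Prj\CA$ and lifting through $X_1$ when $F(Q)=0$, one checks that $\va(F)=0$ is equivalent to $F(P)=0$ for every $P\in\Prj\CA$. This pins down $\mmodd\CX$ as $\mmod\CX\cap\CE$, and so the essential image of any equivalence onto $\CE$ meets $\mmod\CX$ exactly in $\mmodd\CX$.

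Next I would verify that $\mathfrak{F}$ sends $\mmod\underline{\CX}$ into $\mmodd\CX$. For each $X\in\CX$, choose an epimorphism $P\twoheadrightarrow X$ with $P\in\Prj\CA\subseteq\CX$; using that any morphism $X'\to X$ factoring through a projective lifts through $P$, the subfunctor $\CP(-,X)\subseteq \CX(-,X)$ coincides with the image of $\CX(-,P)\to\CX(-,X)$, giving an exact sequence
\[\CX(-,P)\lrt \CX(-,X)\lrt \mathfrak{F}(\underline{\CX}(-,\underline{X}))\lrt 0.\]
Splicing this with an arbitrary presentation $\underline{\CX}(-,\underline{X}_1)\to\underline{\CX}(-,\underline{X}_0)\to F\to 0$ yields a presentation
\[\CX(-,X_1\oplus P)\lrt \CX(-,X_0)\lrt \mathfrak{F}(F)\lrt 0\]
whose left arrow is induced by an epimorphism in $\CA$, so $\mathfrak{F}(F)\in\mmodd\CX$.

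For essential surjectivity, given $G\in\mmodd\CX$ with presentation whose connecting map $d\colon X_1\to X_0$ is genuinely epi in $\CA$, set $F:=\Coker(\underline{\CX}(-,\underline{X}_1)\to\underline{\CX}(-,\underline{X}_0))$ and compare $\mathfrak{F}(F)$ with $G$ objectwise; both become quotients of $\CX(X',X_0)$, and the two agree precisely when $\CP(X',X_0)\subseteq d\cdot\CX(X',X_1)$ for every $X'\in\CX$. This inclusion is the only genuinely non-formal step and the main obstacle; it is handled by factoring a morphism $X'\to X_0$ through a projective $P'$ and using projectivity of $P'$ to lift the second factor along the epimorphism $d$. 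Finally, for the last assertion, contravariant finiteness of $\CX$ makes $\mmod\CX$ abelian, $\mmodd\CX=\Ker\va$ is then a Serre subcategory by Lemma \ref{va is exact}, and the equivalence transports abelianness to $\mmod\underline{\CX}$; the representable functors $\underline{\CX}(-,\underline{X})$, being restrictions of projectives in $\Mod\underline{\CX}$, visibly provide enough projectives.
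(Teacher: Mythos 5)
Your proposal is correct and follows essentially the same route as the paper: both rest on the identification of $\Mod\underline{\CX}$ with the functors on $\CX$ vanishing on $\Prj\CA$, use an epimorphism from a projective onto an object of $\CX$ to show that $\mathfrak{F}$ carries $\mmod\underline{\CX}$ into $\mmodd\CX$, and obtain essential surjectivity by noting that a presentation of an object of $\mmodd\CX$ has epimorphic connecting map $d$ and that morphisms factoring through projectives lift along $d$. Your objectwise comparison, i.e.\ the inclusion $\CP(X',X_0)\subseteq d\cdot\CX(X',X_1)$, merely spells out the step the paper leaves implicit when it asserts exactness of the row $(-,\underline{X_1})\to(-,\underline{X_0})\to F\to 0$, so this is a detail-level variant rather than a different method.
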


\begin{proof}
Pick $F \in \mmod\underline{\CX}$. Clearly $\mathfrak{F}(F)$ vanishes on projective objects, so to show that $\mathfrak{F}(F) \in \mmodd\CX$, we just need to show that $\mathfrak{F}(F) \in \mmod \CX$. To this end, since $\mathfrak{F}$ is an exact functor, it is enough to show that $\mathfrak{F}(\underline{\CX}(-,\underline{X})) \in \mmod\CX$, for any $X \in \CX$. We let $(-,\underline{X})$ denote the image of $\underline{\CX}(-,\underline{X})$ under $\mathfrak{F}$. Since $\CA$ has enough projective objects, for $X \in \CX$, there exists a short exact sequence
\[ 0 \rt \Omega(X) \rt P \rt X \rt 0,\]
where $P$ is a projective object. Then, we get the following exact sequence
\[0 \rt {( - ,\Omega(X)){\vert}}_{\CX} \rt (-,P) \rt (-,X) \rt (-,\underline{X}) \rt 0\]
in $\Mod\CX$. Hence, $(-,\underline{X}) \in \mmod \CX$, since $\CX$ contains $\Prj \CA$. On the other hand, let $F \in \mmodd \CX$ and take a projective presentation
$(-,X_1) \st{(-,d)}\rt (-,X_0) \rt F \rt 0$ of $F$. Since $F \in \mmodd\CX$, $d: X_1 \rt X_0$ is surjective and hence we have the following commutative diagram
\[\xymatrix{(-, X_1) \ar[r]^{(-,d)} \ar[d] & (-, X_0) \ar[r] \ar[d] & F \ar[r] \ar@{=}[d]  & 0 \\
 (-, \underline{X_1}) \ar[r]^{(-,\underline{d})} & (-, \underline{X_0}) \ar[r] & F \ar[r] & 0.}\]
Note that the two vertical natural transformations on the left attach to any morphism $X \rt X_1$ and $X \rt X_0$, its residue class modulo morphisms factoring through projective objects. As $\mathfrak{F}$ is an equivalence of categories, there exists morphism $d'$ such that $\mathfrak{F}(d')= (-,\underline{d})$. Set
\[F':=\Coker(\underline{\CX}(-,\underline{X_1}) \st{d'} \rt \underline{\CX}(-,\underline{X_0}))\]
Then $\mathfrak{F}(F')=F$, since $\mathfrak{F}$ is an exact functor. The proof of this part is hence complete. It is now plain that $\mmod\underline{\CX}$ is an abelian category.
\end{proof}

Note that in \cite{MT} special subcategories $\CX$ of an abelian category $\CA$, called quasi-resolving subcategories, have been studied with the property that $\mmod\underline{\CX}$ is still an abelian category. $\CX$ is called a quasi-resolving subcategory if it contains the projective objects and closed under finite direct sums and kernels of epimorphisms.

Now we are ready to investigate some examples.

\begin{example}
Let $\CX=\add X$ be a subcategory of $\mmod\La$ such that $\La$ is a summand of $X$. Hence $\CX$ is a contravariantly finite subcategory of $\mmod\La$ containing $\prj\La$. So Theorem \ref{main} applies to show, in view of Proposition \ref{stabel=mmodd}, that the following recollement exists.
\[\xymatrix{\mmod\Ga(\underline{\CX}) \ar[rr]^{\zeta^{-1}_{\underline{X}} i \zeta_X}  && \mmod\Ga(\CX) \ar[rr]^{\va \zeta^{-1}_{X} } \ar@/^2pc/[ll]^{\zeta_{\underline{X}} i_{\rho}  \zeta^{-1}_{X}} \ar@/_2pc/[ll]_{\zeta_{\underline{X}} i_{\la} \zeta^{-1}_{X}} && \mmod\La \ar@/^2pc/[ll]^{\zeta_{X}\va_{\rho}} \ar@/_2pc/[ll]_{\zeta_{X}\va_{\la}} }\]

It is routine to check that this recollement is equivalent to the one presented in \cite[Example 2.10]{P1}. So in this case, we just give a functor category approach to the existence of this recollement.
\end{example}

\begin{example}
As a particular case of the above example, let $\La$ be of finite representation type. Then, in view of Proposition \ref{stabel=mmodd}, we have the recollement
\[\xymatrix{\mmod\Ga(\underline{\mmod\La}) \ar[rr]^{i}  && \mmod\Ga(\mmod\La \ar[rr]^{\va}) \ar@/^1pc/[ll]^{i_{\rho}} \ar@/_1pc/[ll]_{i_{\la}} && \mmod\La \ar@/^1pc/[ll]^{\va_{\rho}} \ar@/_1pc/[ll]_{\va_{\la}} }\]
It is interesting to note that in this recollement Auslander algebra, stable Auslander algebra and the algebra itself are appeared.
\end{example}

\begin{example}
Recall that a $\La$-module $G$ is called Gorenstein projective if it is a syzygy of a $\Hom_{\La}( - ,\Prj\La)$-exact exact complex
\[\cdots \rt P_{1} {\rt} P_0  {\rt} P^0 \rt P^1 \rt \cdots,\]
of projective modules. The class of Gorenstein projective modules is denoted by $\GPrj\La$. Dually one can define the class of Gorenstein injective modules $\GInj\La$. We set $\Gprj\La=\GPrj\La \cap \mmod \La$ and $\Ginj\La=\GInj\La \cap \mmod \La$. $\La$ is called virtually Gorenstein if $(\GPrj \La)^\perp = {}^\perp (\GInj \La)$, where orthogonal is taken with respect to $\Ext^1$, see \cite{BR}. It is proved by Beligiannis \cite[Proposition 4.7]{Be} that if $\La$ is a virtually Gorenstein algebra, then $\Gprj \La$ is a contravariantly finite subcategory of $\mmod \La$.\\

Let $\La$ be a virtually Gorenstein algebra and set $\CX=\Gprj\La$. Hence $\Gprj\La$ is contravariantly finite and obviously contains $\prj\La$. So Theorem \ref{main} applies and again in view of Proposition \ref{stabel=mmodd}, we get the following recollement
\[\xymatrix{\mmod\Ga(\underline{\Gprj\La}) \ar[rr]^{i}  && \mmod\Ga(\Gprj \La) \ar[rr]^{\va} \ar@/^1pc/[ll]^{i_{\rho}} \ar@/_1pc/[ll]_{i_{\la}} && \mmod\La \ar@/^1pc/[ll]^{\va_{\rho}} \ar@/_1pc/[ll]_{\va_{\la}} }\]

Recall that an algebra $\La$ is called of finite Cohen-Macaulay type, CM-finite for short, if $\Gprj\La$ is of finite type. Assume that $\La$ is a CM-finite Gorenstein algebra. Then by \cite[Corollary 3.5]{E}, $\Ga(\underline{\Gprj\La})$ is a self-injective algebra and by \cite[Corollary 6.8(v)]{Be} $\Ga(\Gprj\La)$ is of finite global dimension. Hence, in this case, we have a recollement including three types of algebras: self-injective, finite global dimension and Gorenstein.
\end{example}

\begin{example}
Let $n\geq 1$. Roughly speaking a subcategory $\CX$ of $\mmod\La$ is called $n$-cluster tilting if it is functorially finite and the pair $(\CX, \CX)$ forms a cotorsion pair with respect to
$\Ext^i$ for $0 < i < n$, see \cite[Definition 1.1]{I2} for the exact definition. Obviously, an $n$-cluster tilting subcategory $\CX$ of $\mmod\La$ satisfies the conditions of Theorem \ref{main} and so we have a recollement with respect to $\CX$ \cite{O}.
\end{example}

\begin{remark}
Above examples show that for many different subcategories $\CX$ of $\mmod\La$, we have a relative Auslander's formula, i.e an equivalence $\frac{\mmod\CX}{\mmodd\CX}\simeq \mmod\La$. At least with some extra assumptions on the algebra, we may guarantee that the functor category $\mmod\CX$ has similar nice homological properties as in Auslander's cases. For example, if we assume that $\La$ is a $1$-Gorenstein algebra, then $\CX=\Gprj\La$ is closed under submodules and hence $\mmod(\Gprj\La)$ has global dimension at most $2$, like Auslander's case. Therefore, it seems that it is worth to study this case more explicitly.
\end{remark}

\section{Applications: Endomorphisms of generators}\label{Applications}
Study of Morita equivalence of two algebras through the study of Morita equivalence of related algebras has some precedents in the literature, see e.g. \cite{HT}, \cite{KY} and \cite{FK}. As applications of the recollement of Theorem \ref{main}, we present a result in this direction. We show that our result provide a generalization for the main theorem of \cite{KY}.
We need the following lemma, which is of independent interest, because it provides a description for functors in $\mmodd\CX$.

\begin{lemma}\label{vanish}
Let $\CX$ be a contravariantly finite subcategory of $\mmod A$ containing all finitely generated projective modules, where as usual $A$ is a right coherent ring. Let $F \in \mmod\CX.$ Then $F \in \mmodd\CX$ if and only if $(F, (-,M)\vert_{\CX})= 0$, for all $M \in \mmod A$. Moreover $\Ext^1(F, (-,M)\vert_{\CX})=0$, for all $F \in \mmodd\CX$ and all $M \in \mmod A$.
\end{lemma}

\begin{proof}
Let $F \in \mmodd\CX$ and pick $M \in \mmod A$. Consider epimorphism $(-,X)\st{\varepsilon} \rt F \rt 0$. Let $\eta \in (F, (-,M)\vert_{\CX})$. Since $F(A)=0,$ $\eta_{A} \varepsilon_{A}=0$. So clearly $\eta\varepsilon=0$. This, in turn, implies that $\eta=0$, because $\varepsilon$ is an epimorphism.

Conversely, assume that $(F, (-,M)\vert_{\CX})= 0$, for all $M \in \mmod A$. By Remark \ref{Two exact sequences}, we have the exact sequence
$$0 \lrt F_0 \lrt F \st{\va_{F}} \lrt (-, \va(F))\vert_{\CX} \lrt F_1 \lrt 0$$
such that $F_0$ and $F_1$ are in $\mmodd\CX$. By assumption $\va_F=0$. Therefore $F=F_0$. The proof is hence complete.

Now assume that $F \in \mmodd\CX$. We show that $\Ext^1(F, (-,M)\vert_{\CX})=0$ for all $M \in \mmod A$.
Consider a projective presentation
\[(-,X_1) \st{(-,d)} \lrt (-,X_0) \st{\varepsilon}\lrt F \lrt 0 \]
of $F$, with $X_1$ and $X_0$ in $\CX$. Set $K=\Ker d$. So we get the following exact sequence
\[0 \lrt (-,K)\vert_{\CX} \lrt (-,X_1) \lrt (-,X_0) \st{\varepsilon} \lrt F \lrt 0\]
with $(-,X_1)$ and $(-,X_0)$ projectives. Hence, for $M\in \Mod A$, $\Ext^1(F, (-,M)\vert_{\CX})$ can be calculated by the deleted sequence
\[0 \lrt (-,K)\vert_{\CX} \lrt (-,X_1) \lrt (-,X_0) \lrt 0.\]

Pick $M \in \mmod A$ and apply the functor $( - , (-,M)\vert_{\CX})$ on this sequence to obtain the following sequence
\[0 \lrt ((-,X_0), (-,M)\vert_{\CX}) \lrt ((-,X_1), (-,M)\vert_{\CX}) \lrt ((-,K)\vert_{\CX}, (-,M)\vert_{\CX}).\]
So, to complete the proof, it is enough to show that this sequence is exact. This we do.
Since by Lemma \ref{full and faithful}, $\va_{\rho}$ is full and faithful, we deduce that the vertical maps of the diagram
\[\xymatrix{ 0 \ar[r] & \Hom_{A}(X_0, M) \ar[r] \ar[d]^{\va_{\rho}} & \Hom_{A}(X_1,M) \ar[r] \ar[d]^{\va_{\rho}} & \Hom_{A}(K,M) \ar[d]^{\va_{\rho}} \\ 0 \ar[r] & ((-,X_0), (-,M)\vert_{\CX}) \ar[r]  & ((-,X_1), (-,M)\vert_{\CX}) \ar[r]  & ((-,K)\vert_{\CX}, (-,M)\vert_{\CX}) }\]
are isomorphisms. On the other hand, since $F \in \mmodd\CX$, the sequence $0 \lrt K \lrt X_1 \lrt X_0 \lrt 0$ is exact. So the left exactness of $\Hom_{A}( - ,M)$, implies the exactness of the upper row. Hence the lower row is exact and we get the result.
\end{proof}

\begin{theorem}\label{Morita}
Let $\La $, resp. $\La'$, be artin algebras. Let $\CX \subseteq \mmod\La$, resp. $\CX'\subseteq \mmod\La'$, be subcategories of finite type such that $\La, D(\La) \in \CX$, resp. $\La', D(\La') \in \CX'$. Then $\La$ and $\La'$ are Morita equivalent if $\Ga(\CX)$ and $ \Ga(\CX')$ are Morita equivalent. In particular, in this situation $\Ga(\underline{\CX})$ and $\Ga(\underline{\CX'})$ are also Morita equivalent.
\end{theorem}

\begin{proof}
Since, $\CX$ and $\CX'$ are of finite type, they are contravariantly finite subcategories of $\mmod\La$ and $\mmod\La'$, respectively. Moreover, they both are containing projectives. So Theorem \ref{main}, applies. Assume that $\Ga(\CX)$ and $\Ga(\CX')$ are Morita equivalent and $\Phi:\Ga(\CX) \lrt \Ga(\CX')$ denote the equivalence. Let $\Phi:\mmod\Ga(\CX) \rt \mmod\Ga(\CX')$ denote the equivalence which of course is an exact functor. In view of the related recollements of $\CX$ and $\CX'$ obtained from Theorem \ref{main}, to get the proof, it suffices to prove that $\Phi(F) \in \mmodd\CX'$, for each $F \in \mmodd\CX$ and similarly for its quasi-inverse $\Psi=\Phi^{-1}$. By symmetry we just prove it for $\Phi$. Let $F \in \mmodd\CX$. By the above lemma, to show that $\Phi(F) \in \mmodd\CX'$, we show that $(\Phi(F), (-,M')\vert_{\CX'})=0$, for all $M'\in\mmod\La'$. To do this, pick $M \in \mmod\La'$. Since $D(\La') \in \CX'$, there exists a monomorphism $0 \rt M' \rt I'$ in $\mmod\La'$ with $I' \in \inj\La'$. So there is a monomorphism
\[0 \rt (-,M')\vert_{\CX'} \st{\delta}\lrt (-,I')\] in $\mmod\CX'$. Note that $( - ,I')$ is in fact a projective object in $\mmod\CX'$, because $D(\La') \in \CX'$. Since $\Phi$ preserves projective functors we get monomorphism $0 \rt \Phi^{-1}((-,M)\vert_{\CX}) \st{\Phi^{-1}(\delta)}\lrt (-,X)$ in $\mmod\CX$ with $X \in \CX$. Now for any $\eta \in (\Phi(F), (-,M)\vert_{\CX'})$, $\Phi^{-1}(\delta)\Phi^{-1}(\eta) \in (F, (-,X))$ should be zero, since $F \in \mmodd\CX$, see Lemma \ref{vanish}. Hence $\delta\eta=0$. This implies that $\eta=0$ since $\delta$ is a monomorphism. It is now plain that  $\Ga(\underline{\CX})$ and $\Ga(\underline{\CX'})$ are also Morita equivalent. The proof is hence complete.
\end{proof}

The study of endomorphism algebras of modules over an algebra goes back to Morita \cite{M} motivated by its relation to Nakayama conjecture. Morita \cite{M} studied algebra $\Ga$ that is isomorphic to the endomorphism algebra $\End_{\La}(M)$ of a generator $M$ over a self-injective algebra $\La$. Such algebras are called Morita algebras, see e.g. \cite{KY, KY2}.

\begin{corollary}(see \cite[Corollary 3.3]{KY}).
Let $\Ga$ and $\Ga'$ be Morita algebras over self-injective algebras $\La$ and $\La'$. If $\Ga$ and $\Ga'$ are Morita equivalent, then so are $\La$ and $\La'$.
\end{corollary}

\begin{proof}
Let $\Ga=\End_{\La}(M)$ and $\Ga'=\End_{\La'}(M')$, where $M$, resp. $M'$, is a generator over $\La$, resp. $\La'$. Set $\CX=\add M$ and $\CX'=\add M'$. Clearly $\CX$ and $\CX'$ satisfy the assumption of the Theorem \ref{Morita}, because $\La$ and $\La'$ are self-injective. Hence the result follows.
\end{proof}

Recall that a $\La$ module  $N$ is called $n$-cluster tilting, where $n$ is an integer greater than $1$, if
{\small {\[\add N  = \{X \in \mmod\La \ | \ \Ext^i_{\La}(N,X)=0, \ 0 < i < n \}= \{X \in \mmod\La \ | \ \Ext^i_{\La}(X,N)=0, \ 0 < i < n \}. \]}}

\begin{corollary}(see \cite[Theorem 4.3]{KY}).
Let $\Ga$ be a Morita algebra which is Morita equivalent to the algebra $\End_{\La'}(N)$, where $N$ is an $n$-cluster tilting module over an artin algebra $\La'$. Then $\La'$ is self-injective.
\end{corollary}

\begin{proof}
Let $\Ga\cong \End_{\La}(M)$, where $M$ is a generator of a self-injective algebra $\La$. Set $\CX=\add M$ and $\CX'=\add N$. It is easy to see that $\CX$ and $\CX'$ satisfy the assumptions of Theorem \ref{Morita}. So $\La$ and $\La'$ are Morita equivalent. This implies that $\La'$ is self-injective.
\end{proof}

\begin{corollary}
Let $M$ and $M'$ be $n$-cluster tilting and $m$-cluster tilting modules over artin algebras $\La$ and $ \La'$, respectively. If $\End_{\La}(M)$ and $\End_{\La'}(M')$ are Morita equivalent, then $\La$ and $\La'$ are also Moria equivalent.
\end{corollary}

\begin{proof}
Set $\CX=\add M$ and $\CX'=\add M'$ and note that they satisfy the conditions of Theorem \ref{Morita}. This completes the proof.
\end{proof}

\begin{remark}\label{Iyama}
For $n=m$ note that the above corollary is also a consequences of higher Auslander correspondence of Iyama \cite{I}. The point is that the subcategory $\prj \La$ of $\add M$ can be characterized categorically. In fact, an indecomposable summand $X$ of $M$ is projective, resp. non-projective, if and only if the $\End_{\La}(M)$-module ${\rm top}\Hom_{\La}(M,X)$ has projective dimension at most $n-1$, precisely $n+1$. As we saw, Theorem \ref{Morita} provides a different proof. We would like to thank Osamu Iyama for this remark.
\end{remark}

It has been proved by Auslander \cite{Au2} that for an arbitrary artin algebra $\La$ there exists an artin algebra $\widetilde{{\La}}$ of finite global dimension and an idempotent $\xi$ of $\widetilde{{\La}}$ such that $\La=\xi\widetilde{{\La}}\xi$. Therefore, artin algebras of finite global dimension determine all artin algebras. To construct the algebra $\widetilde{{\La}}$, let $J$ be the radical of $\La$ and $n$ be its nilpotency index. Set $M:= \bigoplus_{1 \leq i \leq n} \frac{\La}{J^i}$, as right $\La$-module. Then $\widetilde{{\La}}=\End_{\La}(M)$. We throughout call $\widetilde{{\La}}$ the A-algebra of $\La$, where `A' stands both for `Auslander' and also `Associated'. As another corollary of Theorem \ref{Morita} we have the following result.

\begin{corollary}
Let $\La$ and $\La'$ be self-injective artin algebras. If their A-algebras $\widetilde{{\La}}$ and $\widetilde{{\La'}}$ are Morita equivalent, then so are $\La$ and $\La'$. In this case, stable A-algebras of $\La$ and $\La'$ are also Morita equivalent.
\end{corollary}

\begin{proof}
Let $\widetilde{{\La}}=\End_{\La}(M)=\Ga(\add M)$ and $\widetilde{{\La}'}=\End_{\La'}(M')=\Ga(\add M')$. Set $\CX=\add M$ and $\CX'=\add M'$. So $\mmod\widetilde{\La}\simeq \mmod\CX$ and $\mmod\widetilde{\La'}\simeq \mmod\CX'$. Since for self-injective algebras the subcategories of projective and injective modules coincide and $\La \in \CX$, resp. $\La' \in \CX'$, we deduce that $D(\La) \in \CX$, resp. $D(\La') \in \CX'$. Now the result follows immediately from the above Theorem \ref{Morita}.
\end{proof}

\section{Covariant functors}\label{Covariant functor}
Throughout this section, assume that $\CX$ is a functorially finite subcategory of $\mmod\La$ containing projectives, where $\La$ is an artin algebra over a commutative artinian ring $R$. The aim of this section is to construct analogously a recollement involving the category of finitely presented covariant functors $\CX{\mbox{-}{\rm mod}}$ and the category of left $\La$-modules. To this end, we use the structure of injective objects in $\CX\mbox{-}{\rm mod}$ and follow the general argument as in the proof of Theorem \ref{main}, i.e. introduce three appropriate functors that are mutually adjoints and apply Remark \ref{existence of recollements}. Since injectives of $\CX{\mbox{-}{\rm mod}}$ play a significant role in the functors appearing in this recollement, we study them in a subsection with some details.

\subsection{Injective finitely presented covariant functors}\label{Injective FP covariant functors}
Our aim in this subsection is to study $\inj(\CX{\mbox{-}}{\rm mod})$, the injective objects of $\CX{\mbox{-}}{\rm mod}$. Let us begin by the following general remark.

\sss \label{GJ-Injectives} Let $A$ be an arbitrary ring. Let $(\mmod A)\mbox{-}{\rm mod}$ denote the subcategory of $(\mmod A)\mbox{-}{\rm Mod}$ consisting of finitely presented covariant functors on $\mmod A$. It is proved by Auslander \cite[Lemma 6.1]{Au1} that for a left $A$-module $M$, the covariant functor $ - \otimes_AM$ is finitely presented if and only if $M$ is a finitely presented left $A$-module. It is known that there is a full and faithful functor $T: A{\mbox{-}\rm{mod}} \lrt (\mmod A)\mbox{-}{\rm mod}$ defined by the attachment $M \mapsto {( - \otimes_AM)}\vert_{\mmod A}$. Gruson and Jensen \cite[5.5]{GJ} showed that the category $(\mmod A)\mbox{-}{\rm mod}$ has enough injective objects and injectives are exactly those functors isomorphic to a functor of the form $ - \otimes_AM$, for some left $A$-module $M$, see also \cite[Proposition 2.27]{Pr}.

\sss \label{Functor T} Consider the functor \[t:=t_{{}_{\CX}} : \La{\mbox{-}\rm{mod}} \lrt \CX{\mbox{-}}{\rm mod}\] defined by the attachment $M \mapsto {( - \otimes_{\La}M)}\vert_{\CX}$. It is easy to see that $( - \otimes_{\La}M)\vert_{\CX} \in \CX{\mbox{-}}{\rm mod}$. The proof is similar to \cite[Lemma 6.1]{Au1}: one should apply the functorial isomorphism
\[-\otimes_{\La}P \simeq (\Hom_{\La}(P,\La), - ),\]
where $P \in \prj\La$, to the first two terms of the exact sequence
\[ - \otimes_{\La}\La^n \rt  - \otimes_{\La}\La^m \rt M \rt 0,\]
that is induced from a projective presentation $\La^n \rt \La^m \rt M \rt 0$ of $M$.
Obviously $t$ sends any morphism $f:M \rt M'$ of left $\La$-modules to $(-\otimes f)\vert_{\CX}$.

\begin{slemma}\label{t is full}
The functor $t$ defined above is full and faithful.
\end{slemma}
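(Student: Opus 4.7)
The plan is to reduce fullness and faithfulness to the case of finitely generated free left $\La$-modules via a projective presentation, and then invoke Yoneda's lemma. The crucial observation is that, since $\CX\supseteq \prj\La$, the free right module $\La^m$ lies in $\CX$ for every $m$, and the adjunction (or direct computation) yields a natural isomorphism $X\otimes_\La\La^m\cong \Hom_\La(\La^m,X)$ for $X\in\CX$, where on the left $\La^m$ is viewed as a free left module and on the right as a free right module. Consequently $t(\La^m)\cong \CX(\La^m,-)$ as an object of $\CX\mbox{-}{\rm mod}$, so $t$ carries finitely generated free left modules to representable, and hence projective, objects of $\CX\mbox{-}{\rm mod}$.

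Next I would verify full faithfulness on finitely generated free modules. By Yoneda's lemma applied in the covariant functor category $\CX\mbox{-}{\rm mod}$,
\[
\Hom_{\CX\mbox{-}{\rm mod}}\bigl(\CX(\La^m,-),t(M')\bigr)\;\cong\; t(M')(\La^m)=\La^m\otimes_\La M'\cong \Hom_\La(\La^m,M').
\]
A direct chase of $\id_{\La^m}$ through this composition shows that the resulting bijection is exactly $f\mapsto t(f)$. Hence $t$ is full and faithful on finitely generated free modules.

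For a general $M\in\La\mbox{-}{\rm mod}$, fix a projective presentation $\La^n\lrt\La^m\lrt M\lrt 0$. Applying $t$ gives an exact sequence $t(\La^n)\lrt t(\La^m)\lrt t(M)\lrt 0$ in $\CX\mbox{-}{\rm mod}$, since $-\otimes_\La M$ is right exact. Applying the left exact functors $\Hom_\La(-,M')$ and $\Hom_{\CX\mbox{-}{\rm mod}}(-,t(M'))$ to the two presentations, and using the naturality of $t$, produces a commutative diagram with exact rows
\[
\xymatrix@C=0.25cm{0\ar[r]&\Hom_\La(M,M')\ar[r]\ar[d]^{t}&\Hom_\La(\La^m,M')\ar[r]\ar[d]^{\cong}&\Hom_\La(\La^n,M')\ar[d]^{\cong}\\ 0\ar[r]&\Hom_{\CX\mbox{-}{\rm mod}}(t(M),t(M'))\ar[r]&\Hom_{\CX\mbox{-}{\rm mod}}(t(\La^m),t(M'))\ar[r]&\Hom_{\CX\mbox{-}{\rm mod}}(t(\La^n),t(M'))}
\]
in which the right two verticals are isomorphisms by the previous step. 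The five lemma forces the leftmost vertical, which is precisely the map induced by $t$, to be an isomorphism, yielding the claim.

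The only real obstacle is bookkeeping: one must distinguish the free module $\La^m$ as left module from its avatar as right module sitting inside $\CX$, and confirm that the Yoneda isomorphism on free modules really agrees with the action of $t$ on morphisms. Once this identification is verified, the remainder is a standard projective-resolution-plus-five-lemma argument.
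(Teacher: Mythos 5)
Your proof is correct, but it is organized differently from the paper's. The paper argues directly on a given natural transformation $\eta:(-\otimes M)\vert_{\CX}\rt(-\otimes M')\vert_{\CX}$: it evaluates at $\La$ (which lies in $\CX$ since $\prj\La\subseteq\CX$) to produce $h:M\rt M'$, extends to $\La^n$ by additivity, and then shows $\eta_X=X\otimes h$ for every $X\in\CX$ by naturality along a projective presentation $\La^n\rt\La^m\rt X\rt 0$ of the \emph{evaluation argument} $X$, using right exactness of tensor; faithfulness is immediate from evaluation at $\La$. You instead resolve the \emph{module variable} $M$: you first identify $t(\La^m)\cong\CX(\La^m,-)$, invoke Yoneda in the covariant functor category to get full faithfulness on finitely generated free left modules (together with the check that the Yoneda bijection is $f\mapsto t(f)$), and then transfer to arbitrary $M$ by applying the left exact functors $\Hom_\La(-,M')$ and $\Hom_{\CX\text{-}{\rm mod}}(-,t(M'))$ to a projective presentation of $M$ and using the five lemma; here the exactness of $t(\La^n)\rt t(\La^m)\rt t(M)\rt 0$ again comes from right exactness of tensor, computed pointwise. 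Both arguments hinge on exactly the same two ingredients ($\prj\La\subseteq\CX$, so free modules are available as evaluation points and yield representables, and right exactness of $-\otimes_\La-$), but the paper's version is shorter and more hands-on, while yours is more structural, treats fullness and faithfulness simultaneously, and is the standard resolution-plus-Yoneda template that would adapt verbatim to similar comparison functors elsewhere in the paper (e.g.\ to $\va_\rho$ in Lemma \ref{full and faithful}). The only point you must genuinely verify, as you note, is the compatibility of the identification $t(\La^m)\cong\CX(\La^m,-)$ and the Yoneda isomorphism with the action of $t$ on morphisms; once that bookkeeping is done, the argument is complete.
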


\begin{proof}
By definition, it is plain that $t$ is a faithful functor. To prove that it is full, consider a natural transformation $\eta:(-\otimes M)\vert_{\CX}\rt (-\otimes M')\vert_{\CX}$.
There exists a morphism $h:M \rt M'$ that commutes the following diagram
\[\xymatrix{ M \ar[r]^{h} \ar[d]^{\wr} & M' \ar[d]^{\wr}\\ \La \otimes_{\La} M \ar[r]^{\eta_{{}_{\La}}} & \La \otimes_{\La} M'.}\]
Therefore $\eta_{{}_{\La}}\simeq \La\otimes_{\La}h$. This equality can be extended easily to $\La^n$, that is, $\eta_{_{{}_{\La^n}}}=\La^n \otimes_{\La}h$. Now let $X$ be an arbitrary right $\La$-module. Consider a projective presentation $\La^n\rt \La^m \rt X \rt 0$ of $X$. It follows from the following diagram
\[\xymatrix{\La^n\otimes_{\La} M \ar[d]^{\La^n\otimes_{\La} h} \ar[r] & \La^m \otimes_{\La} M \ar[r] \ar[d]^{\La^m\otimes_{\La} h} & X\otimes_{\La} M \ar[r] \ar[d]^{\eta_{{}_X}} & 0 \\ \La^n\otimes_{\La} M' \ar[r] & \La^m \otimes_{\La} M' \ar[r] & X\otimes_{\La} M' \ar[r] & 0.}\]
that $\eta_{{}_{X}}\simeq X\otimes_{\La}h$. So $t(h)=\eta$. This completes the proof.
\end{proof}

\sss By Remark \ref{Dualising R-Variety}, $\mmod\La$ and also $\CX$ are dualising $R$-varieties. So duality $D=\Hom_R( - ,E)$ induces the following commutative diagram
\[\xymatrix{\mmod(\mmod\La) \ar[rr]^{D_{\mmod\La}} \ar[d]^{{\vert}_{{}_{\CX}}} && (\mmod \La)\mbox{-}{\rm mod} \ar[d]^{{\vert}_{{}_{\CX}}} \\ \mmod\CX \ar[rr]^{D_{\CX}} && \CX{\mbox{-}\rm{mod}} }\]
where the rows are duality and columns are restrictions.

Since $D_{\mmod\La}$ is a duality, it sends projective objects to the injective ones. Hence, in view of \ref{GJ-Injectives}, we may deduce that for $M \in \mmod\La$, there exists a left $\La$-module $M'$ such that $D_{\mmod\La}(\Hom_{\La}(-,M)) \simeq - \otimes_{\La}M'$. $M'$ is uniquely determined up to isomorphism, thanks to the faithfulness of the functor $T$.\\

This isomorphism can be restricted to $\CX$, to induce the following isomorphism
\[D_{\mmod\La}(\Hom_{\La}(-,M))\vert_{\CX} \simeq ( - \otimes_{\La}M')\vert_{\CX}.\]
In case $X \in \CX$, this can be written more simply as \[D_{\CX}((-,M)) \simeq ( - \otimes_{\La}M')\vert_{\CX}.\]

Therefore, we have the following proposition.

\begin{sproposition}
Let $\La$ be an artin algebra and $\CX$ be a functorially finite subcategory of $\mmod\La$ containing $\prj\La$. Then $\CX{\mbox{-}{\rm mod}}$ has enough injectives. Injective objects are those functors of the form $( - \otimes D(X))\vert_{\CX}$, for some $X$ in $\CX.$
\end{sproposition}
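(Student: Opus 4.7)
The plan is to transport the known abundance of projectives in $\mmod\CX$ across the duality $D_{\CX}\colon \mmod\CX \to \CX{\mbox{-}{\rm mod}}$. This duality is at our disposal because $\CX$ is a functorially finite subcategory of the dualising $R$-variety $\mmod\La$ and is therefore itself a dualising $R$-variety. Being an exact contravariant equivalence, $D_{\CX}$ interchanges projectives with injectives and projective presentations with injective copresentations, so its formal properties carry out most of the work.

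First I would recall that $\mmod\CX$ has enough projectives, namely the representables $\CX(-,X)$ with $X\in\CX$. Given any $G\in\CX{\mbox{-}{\rm mod}}$, essential surjectivity of $D_{\CX}$ yields $F\in\mmod\CX$ with $G\simeq D_{\CX}(F)$; applying $D_{\CX}$ to a projective presentation $\CX(-,X_1)\to\CX(-,X_0)\to F\to 0$ produces an injective copresentation $0\to G\to D_{\CX}(\CX(-,X_0))\to D_{\CX}(\CX(-,X_1))$ of $G$. This simultaneously establishes that $\CX{\mbox{-}{\rm mod}}$ has enough injectives and that every indecomposable injective appears as a direct summand of some $D_{\CX}(\CX(-,X))$.

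Next I would feed this into the commutative diagram displayed just before the statement: for each $X\in\CX$ there exists a left $\La$-module $M'$ with $D_{\CX}(\CX(-,X))\simeq (-\otimes_{\La}M')\vert_{\CX}$. Combining with the previous paragraph yields the stated form for every injective. Closure under direct summands is automatic since idempotents split in $\mmod\La$ (and hence in the summand-closed hull of $\CX$ inside it), so summands of $\CX(-,X)$ are themselves representable, and their $D_{\CX}$-images remain of the stated shape. Uniqueness of the associated $M'$ up to isomorphism is then an immediate application of Lemma \ref{t is full}: since $t_{\CX}\colon \La{\mbox{-}{\rm mod}}\to\CX{\mbox{-}{\rm mod}}$ is fully faithful, any isomorphism $(-\otimes M_1)\vert_{\CX}\simeq (-\otimes M_2)\vert_{\CX}$ is induced by an isomorphism $M_1\simeq M_2$ in $\La{\mbox{-}{\rm mod}}$.

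The only point that requires real input beyond generalities about dualising $R$-varieties is the identification $D_{\CX}(\CX(-,X))\simeq (-\otimes_{\La}M')\vert_{\CX}$, and this has already been recorded above the statement: the Gruson--Jensen description of $\inj((\mmod\La){\mbox{-}{\rm mod}})$ combined with the global duality $D_{\mmod\La}$ produces the witness $M'$, and restricting along the commutative square $D_{\CX}\circ(-)\vert_{\CX}=(-)\vert_{\CX}\circ D_{\mmod\La}$ transfers the conclusion to $\CX{\mbox{-}{\rm mod}}$. Thus no separate computation is needed, and the proof reduces to stringing together these three observations.
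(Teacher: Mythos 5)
Your argument is correct and follows essentially the same route as the paper: since $\CX$ is a dualising $R$-variety, the duality $D_{\CX}$ carries the representable projectives of $\mmod\CX$ (and their presentations) to injectives of $\CX\mbox{-}{\rm mod}$ (and copresentations), and the commutative restriction square together with the Gruson--Jensen description and the full faithfulness of $t$ identifies these injectives as $(-\otimes_{\La}M')\vert_{\CX}$ with $M'$ unique up to isomorphism. Your explicit handling of direct summands and of the enough-injectives step is just a slightly more detailed write-up of what the paper leaves implicit in the discussion preceding the proposition.
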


\subsection{Existence of Recollement}
In this subsection, we will introduce two more functors $\kappa$ and $v$ so that together with $t$ defined in \ref{Functor T}, we construct the desired recollement.

\sss \label{Definition of K} Let us start by introducing $\kappa: \La {\mbox{-}\rm{mod}} \lrt \CX {\mbox{-}\rm{mod}}$. Pick a left $\La$-module $M$ and consider an injective copresentation $0\rt M \rt I^0 \st{d} \rt I^1$ of it. By duality $D=\Hom(-,E(R/J))$, there exist a morphism $\delta: P_1 \rt P_0$ of projective right $\La$-modules such that $D(\delta)\simeq d$. Hence we have the following commutative diagram
\[\xymatrix{0 \ar[r]  & M \ar[r]  & I^0 \ar[r]^{d} \ar[d]^{\wr} & I^1 \ar[d]^{\wr}\\ &  & D(P_0) \ar[r]^{D(\delta)} & D(P_1).}\]
Define $\kappa(M)$ as
\[ \kappa(M)=\Ker(( - \otimes_{\La} D(P_0))\vert_{\CX} \st{( - \otimes d)\vert_{\CX}} \lrt ( - \otimes_{\La} D(P_1))\vert_{\CX}).\]
Note that since $\CX$ contains projective right $\La$-modules, the sequence
\[ 0 \lrt \kappa(M) \lrt ( - \otimes_{\La} D(P_0))\vert_{\CX} \st{( - \otimes d)\vert_{\CX}} \lrt ( - \otimes_{\La} D(P_1))\vert_{\CX}.\]
is an injective copresentation of $\kappa(M)$ in $\CX{\mbox{-}\rm{mod}}$. The map $\kappa$ can be naturally defined on the morphisms, so we leave it to the readers.\\

\sss \label{Definition of v} We define a functor $v:\CX{\mbox{-}\rm{mod}}\lrt \La{\mbox{-}\rm{mod}}$ as follows. Let $F \in \CX{\mbox{-}\rm{mod}}$. Consider injective copresentation
\[0 \rt F \rt ( - \otimes_{\La} D(X_0))\vert_{\CX} \st{d} \lrt ( - \otimes_{\La} D(X_1)) \vert_{\CX}\]
of $F$. By Lemma \ref{t is full}, there exists a unique morphism $f:D(X_0)\rt D(X_1)$ such that $d=( - \otimes f)\vert_{\CX}.$ We define the functor $v:\CX {\mbox{-}\rm{mod}}\rt \La {\mbox{-}\rm{mod}}$ by the attachment
\[v(F):=\Ker(f: D(X_0)\rt D(X_1)).\]
In a natural way, $v$ can be defined on the morphisms.\\

\sss We denote by $\CX\mbox{-}{\rm mod}^0$ the subcategory of $\CX{\mbox{-}\rm{mod}}$ consisting of all functors that vanish on projective right $\La$-modules. By definition, it can be seen that $\CX{\mbox{-}\rm{mod}}^{0\mbox{}}$ is the kernel of the functor $v$ defined in \ref{Definition of v}.

Now we have enough ingredients to state the main theorem of this subsection.

\begin{stheorem}\label{main3}
Let $\CX$ be a functorially finite subcategory of $\mmod\La$ consisting $\prj\La$. Then, there exists the recollement
\[\xymatrix{\CX\mbox{-}{\rm mod}^0 \ar[rr]^{j}  && \CX\mbox{-}{\rm mod} \ar[rr]^{v} \ar@/^1pc/[ll]^{j_{\rho}} \ar@/_1pc/[ll]_{j_{\la}} && \La\mbox{-}{\rm mod} \ar@/^1pc/[ll]^{\kappa} \ar@/_1pc/[ll]_{t} }\]
of abelian categories.
\end{stheorem}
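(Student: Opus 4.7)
The plan is to apply Remark \ref{existence of recollements} to the functor $v$: it suffices to check that $v$ is exact, that $(t,v)$ and $(v,\kappa)$ are adjoint pairs, and that $t$ (equivalently $\kappa$) is fully faithful; the equality $\Ker v = \CX\mbox{-}{\rm mod}^0$ is then true by definition. Full faithfulness of $t$ is already Lemma \ref{t is full}, so the real content is exactness of $v$ and the two adjunctions.

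The cleanest route is to transport the recollement of Theorem \ref{main} across the two available dualities: the usual duality $D=\Hom_R(-,E)$ between $\mmod\La$ and $\La\mbox{-}{\rm mod}$, and the duality $D_{\CX}:\mmod\CX\rt\CX\mbox{-}{\rm mod}$ coming from the fact that $\CX$, being functorially finite in $\mmod\La$, is a dualising $R$-variety (Remark \ref{Dualising R-Variety}). First I would establish the natural isomorphism $D_{\CX}((-,X))\simeq (-\otimes_{\La} D(X))|_{\CX}$ for $X\in\CX$, which is immediate from $\Hom_R(Y\otimes_{\La}X,E)\cong\Hom_{\La}(Y,D(X))$ for finitely presented $Y\in\CX$. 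This allows one to identify
\[
v\simeq D\circ\va\circ D_{\CX},\qquad t\simeq D_{\CX}\circ\va_{\rho}\circ D,\qquad \kappa\simeq D_{\CX}\circ\va_{\la}\circ D,
\]
by comparing the explicit formulas in \ref{Definition of v}, \ref{Functor T} and \ref{Definition of K} with what happens when one applies $D_{\CX}$ to the projective presentation of $D_{\CX}(F)$ in $\mmod\CX$ and then runs it through $\va$, $\va_{\la}$ or $\va_{\rho}$.

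Once these identifications are in place, the remaining work is essentially formal. Exactness of $v$ is obtained as a composition of exact functors: $D_{\CX}$ and $D$ are exact (each being a duality of abelian categories), and $\va$ is exact by Lemma \ref{va is exact} (using that $\CX$ is contravariantly finite and contains $\prj\La$). The adjunctions $(t,v)$ and $(v,\kappa)$ are obtained by transporting $(\va_{\la},\va)$ and $(\va,\va_{\rho})$ (Proposition \ref{right and left adjoint}) across the dualities; concretely,
\[
\Hom(t(M),F)\cong\Hom(\va_{\rho}D(M),D_{\CX}(F))\cong\Hom(\va D_{\CX}(F),D(M))\cong\Hom(M,v(F)),
\]
naturally in $M\in\La\mbox{-}{\rm mod}$ and $F\in\CX\mbox{-}{\rm mod}$, with the analogous chain producing $(v,\kappa)$. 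Invoking Remark \ref{existence of recollements} then yields the recollement.

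The main obstacle is bookkeeping: I must verify carefully that the formulas in \ref{Definition of v}, \ref{Functor T} and \ref{Definition of K} genuinely match the composites involving $\va$, $\va_{\la}$, $\va_{\rho}$, and in particular that the ``uniqueness'' assertions (independence of the choice of copresentations) used to define $v$ and $\kappa$ correspond precisely to the same well-definedness on the $\mmod\CX$ side. If one prefers to avoid this transport argument, a parallel strategy works: mimic the proofs of Lemma \ref{va is exact}, Lemma \ref{full and faithful} and Proposition \ref{right and left adjoint} verbatim in the covariant setting, using the injective copresentation with terms $(-\otimes D(X_i))|_{\CX}$ (which exists because $\CX$ is covariantly finite and contains $\prj\La$) as the formal dual of the projective presentation $(-,X_1)\rt(-,X_0)\rt F\rt 0$. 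This is more laborious but uses only the tools already developed in Section \ref{RelAusFormula}.
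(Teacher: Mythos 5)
Your proposal is correct, but its main route is genuinely different from the paper's. The paper's proof of Theorem \ref{main3} is simply the declaration that one repeats, in the covariant setting, the checks made for Theorem \ref{main} (exactness of $v$, the adjunctions $(t,v)$ and $(v,\kappa)$, full faithfulness of $\kappa$) and then applies Remark \ref{existence of recollements}; that is exactly the fallback strategy in your last paragraph. Your primary argument instead transports the recollement of Theorem \ref{main} across the dualities $D$ between $\mmod\La$ and $\La\mbox{-}{\rm mod}$ and $D_{\CX}$ between $\mmod\CX$ and $\CX\mbox{-}{\rm mod}$ (available because the functorially finite $\CX$ is a dualising $R$-variety, Remark \ref{Dualising R-Variety}), via the identifications $v\simeq D\va D_{\CX}$, $t\simeq D_{\CX}\va_{\rho}D$, $\kappa\simeq D_{\CX}\va_{\la}D$ and $\Ker v=\CX\mbox{-}{\rm mod}^0$. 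This is legitimate and arguably cleaner: exactness of $v$ and the two adjunctions then follow formally from Lemma \ref{va is exact}, Proposition \ref{right and left adjoint} and Lemma \ref{full and faithful}, and you have correctly handled the key bookkeeping point that contravariant conjugation interchanges left and right adjoints (so $\va_{\rho}$ yields the \emph{left} adjoint $t$ and $\va_{\la}$ yields the \emph{right} adjoint $\kappa$); it also explains transparently why the injectives of $\CX\mbox{-}{\rm mod}$ are the functors $(-\otimes_{\La}D(X))\vert_{\CX}$, and it anticipates the commutative diagram the paper uses later in the subsection on dualities of right and left modules. Two small slips to repair when writing this up, neither of which affects validity: the basic isomorphism should be $D\Hom_{\La}(Y,X)\cong Y\otimes_{\La}D(X)$, obtained from the adjunction $\Hom_R(Y\otimes_{\La}N,E)\cong\Hom_{\La}(Y,D(N))$ with $N=D(X)$ (as written, your formula tensors the right module $Y$ with the right module $X$); and in your displayed chain the middle term must be $\Hom(D_{\CX}(F),\va_{\rho}D(M))$ rather than $\Hom(\va_{\rho}D(M),D_{\CX}(F))$, since a duality reverses Hom-sets, after which the chain $\Hom(t(M),F)\cong\Hom(D_{\CX}(F),\va_{\rho}D(M))\cong\Hom(\va D_{\CX}(F),D(M))\cong\Hom(M,v(F))$ is precisely the $(t,v)$ adjunction, and the analogous chain gives $(v,\kappa)$.
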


\begin{proof}
For the proof of the existence of the recollement, first it should be investigated that $v$ is an exact functor, then verify that $(t,v)$ and $(v,\kappa)$ are adjoint pairs and finally show that $\kappa$ is fully faithful. Since it is just a routine check similar to what is done for the proof of Theorem \ref{main}, we skip the proof.
\end{proof}

Two special cases are in order as the following two examples.

\begin{sexample}
In the above theorem, set $\CX=\mmod\La.$ Then we have the following recollement

\[\xymatrix{(\mmod\La)\mbox{-}{\rm mod}^0 \ar[rr]  && (\mmod\La)\mbox{-}{\rm mod} \ar[rr] \ar@/^1pc/[ll] \ar@/_1pc/[ll]  && \La\mbox{-}{\rm mod} \ar@/^1pc/[ll] \ar@/_1pc/[ll] }\]
\end{sexample}

\begin{sexample}
If $\La$ is a Gorenstein algebra or more generally a virtually Gorenstein algebra, then $\Gprj\La$ is a contravariantly finite subcategory of $\mmod\La$. Moreover, since it is a resolving subcategory of $\mmod\La$ \cite[Theorem 2.5]{Ho}, i.e. contains all projectives and is closed with respect to extensions and kernels of epimorphisms, by a result of Krause and Solberg \cite{KS}, $\Gprj\La$ is also covariantly finite and hence is a functorially finite subcategory of $\mmod\La$. Hence Theorem \ref{main3} applies and so we have the following recollement

\[\xymatrix{(\Gprj\La)\mbox{-}{\rm mod}^0 \ar[rr]  && (\Gprj\La)\mbox{-}{\rm mod} \ar[rr] \ar@/^1pc/[ll] \ar@/_1pc/[ll] && \La\mbox{-}{\rm mod} \ar@/^1pc/[ll] \ar@/_1pc/[ll] }\]
\end{sexample}

\begin{sremark}
Assume that $\La$ is a self-injective artin algebra and $\CX$ is a functorially finite subcategory of $\mmod\La$ containing $\prj\La$. Then the recollement of Theorem \ref{main3} is the same as the recollement that is constructed in Theorem \ref{main2}. To see this, just one should note that since $\La$ is self-injective, $\prj\La=\inj\La$.
\end{sremark}

\subsection{Dualities of the categories of right and left $\La$-modules}
In this short subsection, associated to any functorially finite subcategory $\CX \supseteq \prj\La$ of $\mmod\La$, a duality will be constructed between the categories of right and left $\La$-modules, also in the stable level.

Let $D:\mmod\CX \lrt \CX\mbox{-}{\rm mod}$ be the usual duality, that exists because $\CX$ is a dualising $R$-variety. It follows from the definition that $D$ can be restricted to a functor
\[D\vert_{\mmodd\CX}: \mmodd\CX \lrt \CX\mbox{-}{\rm mod}^0.\]

Therefore, by Theorems \ref{main} and \ref{main3} we get the following commutative diagram of abelian categories
\[\xymatrix{0 \ar[r] & \mmodd \CX \ar[r] \ar[d]^{D \vert} & \mmod \CX \ar[r] \ar[d]^{D} & \frac{\mmod \CX}{\mmodd \CX} \ar[r] \ar@{.>}[d]^{\overline{D}} & 0 \\
0 \ar[r] &  \CX\mbox{-}{\rm mod}^0 \ar[r] & \CX\mbox{-}{\rm mod} \ar[r] & \frac{\CX\mbox{-}{\rm mod}}{\CX\mbox{-}{\rm mod}^0} \ar[r]  & 0.}\]
such that the horizontal maps are duality. Now we can define the duality $\widetilde{D}_{\CX}$ with respect to the subcategory $\CX$ as composition of the following functors
\[\mmod\La \st{\overline{\va}^{-1}} \lrt \frac{\mmod\CX}{\mmodd\CX} \st{\overline{D}} \lrt \frac{\CX\mbox{-}{\rm mod}}{\CX\mbox{-}{\rm mod}^0} \st{\overline{v}} \lrt \La\mbox{-}{\rm mod},\]
where $\overline{\va}$ and $\overline{v}$ induced from the functors $\va$ and $v$ introduced in Theorems \ref{main} and \ref{main3}, respectively.

Now let $P$ be a projective right $\La$-module. Then
\[ \widetilde{D}_{\CX}(P)=\overline{v} \overline{D} \ \overline{\va}^{-1}(P)=\overline{v} \overline{D}((-, P)\vert_{\CX})=\overline{v}(D(-,P))=\overline{\va}(- \otimes_{\La}D(P))=D(P).\]
Hence right projective $\La$-modules project to the left injective $\La$-modules. Therefore the constructed duality functor $\widetilde{D}_{\CX}$ induces a duality between
$\underline{\rm mod}\mbox{-}\La$ and  $\La\mbox{-}\overline{\rm mod}.$

In particular, if $\CX=\prj\La$, then $\widetilde{D}_{\prj\La}$, provides the usual duality between the stable categories of right and left modules.


\end{document}